\documentclass[10pt,reqno]{amsart}
\usepackage[left=2cm,right=2cm,top=2cm,bottom=2cm]{geometry}
\usepackage[colorlinks=true,allcolors=blue]{hyperref}
\usepackage{color}
\usepackage{amsmath, amssymb, amsthm}
\usepackage{mathtools}
\usepackage[noabbrev,capitalize,nameinlink,nosort]{cleveref}
\usepackage{enumerate}
\usepackage[hang,flushmargin]{footmisc}

\usepackage{booktabs}
\usepackage{makecell}
\usepackage{pgfplots}
\usepackage{pgfplotstable}
\pgfplotsset{compat=1.18}

\usepackage[shortlabels]{enumitem}
\crefformat{enumi}{#2#1#3}
\crefrangeformat{enumi}{#3#1#4 to~#5#2#6}
\crefmultiformat{enumi}{#2#1#3}
{ and~#2#1#3}{, #2#1#3}{ and~#2#1#3}

\let\originalleft\left
\let\originalright\right
\renewcommand{\left}{\mathopen{}\mathclose\bgroup\originalleft}
\renewcommand{\right}{\aftergroup\egroup\originalright}

\crefformat{equation}{#2(#1)#3}

\usepackage{aliascnt}
\newtheorem{lemma}{Lemma}[section]
\crefname{lemma}{Lemma}{Lemmas}
\Crefname{lemma}{Lemma}{Lemmas}

\newaliascnt{proposition}{lemma}

\aliascntresetthe{proposition}
\crefname{proposition}{Proposition}{Propositions}
\Crefname{proposition}{Proposition}{Propositions}

\newaliascnt{theorem}{lemma}
\newtheorem{theorem}[theorem]{Theorem}
\aliascntresetthe{theorem}
\crefname{theorem}{Theorem}{Theorems}
\Crefname{theorem}{Theorem}{Theorems}

\newaliascnt{corollary}{lemma}

\aliascntresetthe{corollary}
\crefname{corollary}{Corollary}{Corollaries}
\Crefname{corollary}{Corollary}{Corollaries}

\newaliascnt{fact}{lemma}
\newtheorem{fact}[fact]{Fact}
\aliascntresetthe{fact}
\crefname{fact}{Fact}{Facts}
\Crefname{fact}{Fact}{Facts}

\newaliascnt{conjecture}{lemma}

\aliascntresetthe{conjecture}
\crefname{conjecture}{Conjecture}{Conjectures}
\Crefname{conjecture}{Conjecture}{Conjectures}

\newaliascnt{claim}{lemma}

\aliascntresetthe{claim}
\crefname{claim}{Claim}{Claims}
\Crefname{claim}{Claim}{Claims}

\theoremstyle{definition}

\newaliascnt{definition}{lemma}

\aliascntresetthe{definition}
\crefname{definition}{Definition}{Definitions}
\Crefname{definition}{Definition}{Definitions}

\newaliascnt{example}{lemma}

\aliascntresetthe{example}
\crefname{example}{Example}{Examples}
\Crefname{example}{Example}{Examples}

\theoremstyle{remark}

\newaliascnt{remark}{lemma}
\newtheorem{remark}[remark]{Remark}
\aliascntresetthe{remark}
\crefname{remark}{Remark}{Remarks}
\Crefname{remark}{Remark}{Remarks}

\newtheorem*{remark*}{Remark}

\newcommand*{\claimproofname}{Proof of claim}

\crefname{section}{Section}{Sections}
\Crefname{section}{Section}{Sections}
\crefname{subsection}{Section}{Sections}
\Crefname{subsection}{Section}{Sections}
\crefname{equation}{}{} % keep equations as just numbers
\crefname{algorithm}{Algorithm}{Algorithms}
\Crefname{algorithm}{Algorithm}{Algorithms}

\AddToHook{cmd/appendix/after}{%
  \crefalias{section}{appendix}%
}

\newcommand{\floor}[1]{\left\lfloor #1 \right\rfloor}

\newcommand{\mc}{\mathcal}

\newcommand{\eps}{\varepsilon}

\newcommand{\hide}[1]{}

\allowdisplaybreaks
\usepackage{parskip}
\makeatletter
\def\thm@space@setup{%
  \thm@preskip=\parskip \thm@postskip=0pt
}
\def\th@remark{%
  \thm@space@setup
  \thm@headfont{\itshape}%
  \normalfont
}
\makeatother

\let\le\leqslant
\let\ge\geqslant

\newcommand{\RR}{\mathbb{R}}

\newcommand{\CC}{\mathbb{C}}

\newcommand{\PP}{\mathbb{P}}

\newcommand{\EE}{\mathbb{E}}
\DeclareMathOperator{\Inf}{Inf}

\DeclareMathOperator{\rank}{rank}
\DeclareMathOperator{\tr}{tr}
\DeclareMathOperator{\Span}{span}
\title[]{O\lowercase{ptimal bound for the polynomial} L\lowercase{ittlewood}--O\lowercase{fford problem}}

\author[]{A\lowercase{lexandr }G\lowercase{rebennikov}}
\address{Institute of Science and Technology Austria (ISTA)}
\email{aleksandr.grebennikov@ist.ac.at}

\begin{document} 
\begin{abstract}    
    We present an exposition of an argument, discovered by GPT-6 Pro, that gives an optimal bound for the polynomial Littlewood--Offord problem. Namely, let $F$ be a degree-$d$ multilinear polynomial that contains $r$ degree-$d$ monomials involving disjoint sets of variables. Then, for i.i.d.\ Rademacher random variables $\xi_1, \ldots, \xi_n$, we have $\PP[F(\xi_1, \ldots, \xi_n) = 0] = O_d(r^{-1/2})$. This improves upon the previous bound of $(\log r)^{O_d(1)} r^{-1/2}$ due to Meka, O.\ Nguyen, and Vu, and resolves a conjecture attributed to H.\ Nguyen and Vu. The key part of the proof is an estimate for the total influence of bounded-degree rational functions, which resolves a recent conjecture of Kothari, Kovacs-Deak, Wang, and Yang.  
\end{abstract}

\maketitle

\textbf{Statement of AI use.} The proof was discovered autonomously by GPT-6 Pro (the chat logs can be found \href{https://chatgpt.com/share/6ab4f17d-7a3c-83eb-a4e6-a99920bd9384}{here}), and the author does not claim any credit for the proof ideas. The argument was subsequently verified, digested, and rewritten from scratch by the author.

\section{Introduction}

The story of polynomial anticoncentration begins with the following question, asked by Littlewood and Offord~\cite{littlewood-offord} in connection with their study of random algebraic equations. If $c_1, \ldots, c_n$ are nonzero real numbers, and $\xi_1, \ldots, \xi_n$ are i.i.d.\ Rademacher random variables (i.e.\ $\PP[\xi_i = 1] = \PP[\xi_i = -1] = 1/2$), what is the maximum probability that the random sum $\sum_{i=1}^n c_i \xi_i$ equals a given value? Littlewood and Offord obtained a bound of the form $O(\log n/\sqrt{n})$ on this probability; shortly after, Erd\H{o}s~\cite{erdos-littlewood-offord} gave an elegant combinatorial proof of the following precise result:
\begin{equation} \label{eq:ELO}
\sup_{x \in \RR}\PP\left[\sum_{i=1}^n c_i \xi_i = x\right] \le {2^{-n}}\binom{n}{\lfloor n/2 \rfloor} = O\left(1/\sqrt{n}\right)\,.
\end{equation}
The polynomial Littlewood--Offord problem asks for an analogue of this bound for polynomials of any fixed degree. This direction was studied by Rosi\'nski and Samorodnitsky~\cite{rosinski-samorodnitsky} in the context of L\'evy chaos, by Costello, Tao, and Vu~\cite{costello-tao-vu} in their work on the singularity of random symmetric matrices, and by Razborov and Viola~\cite{razborov-viola} in their work on polynomial approximations of Boolean functions; for a more detailed history of this problem, we refer to the survey of H.\ Nguyen and Vu~\cite{nguyen-survey} and to the introduction of \cite{kwan-sauermann}.

For a polynomial $F(x_1, \ldots, x_n)$ of degree $d \ge 2$, one cannot obtain a bound on the vanishing probability of $F(\xi_1, \ldots, \xi_n)$ analogous to \cref{eq:ELO} without some additional assumption ensuring that $F$ is ``far from the zero polynomial'': for instance, if $F$ has a factor $(x_1 - 1)$, then $\PP[F(\xi_1, \ldots, \xi_n) = 0] \ge 1/2$. One way to rule out such examples is to assume that $F$ has many nonzero coefficients. H.\ Nguyen and Vu (see \cite{meka-nguyen-vu,razborov-viola}) conjectured that if $F$ is a degree-$d$ multilinear\footnote{A polynomial is multilinear if it is linear in each variable when the others are held fixed; since $\xi_i^2 = 1$, the question about arbitrary polynomials can be essentially reduced to the multilinear case.} polynomial with $m n^{d-1}$ nonzero degree-$d$ coefficients, then 
\begin{equation} \label{eq:nguyen-vu}
\PP[F(\xi_1, \ldots, \xi_n) = 0] = O_d\left(1/\sqrt{m}\right)\,.
\end{equation}
Previously, this bound was only known in the following special cases: $d = 1$, by the classical theorem of Erd\H{o}s discussed above; $d = 2$, by a result of Kwan and Sauermann~\cite{kwan-sauermann}; and when the polynomial $F$ has bounded Chow rank, by a result of Grebennikov and Kwan~\cite[Theorem 1.1(1)]{grebennikov-kwan-littlewood-offord}.
For a general fixed degree $d$, Costello, Tao, and Vu~\cite{costello-tao-vu} and Razborov and Viola~\cite{razborov-viola} obtained bounds on this probability of the form $m^{-c_d}$ for some constant $c_d > 0$. Meka, O.\ Nguyen, and Vu~\cite[Theorem 1.6]{meka-nguyen-vu} improved the bound to $(\log m)^{O_d(1)}/\sqrt{m}$ using a result of Kane~\cite{kane}.

In this work, we establish the bound \cref{eq:nguyen-vu}, resolving the Nguyen--Vu conjecture. In fact, instead of requiring $F$ to have many nonzero coefficients, we find it more convenient to work under a weaker assumption that $F$ has a large ``matching of degree-$d$ monomials'' (which also appeared in \cite{meka-nguyen-vu,razborov-viola}). Namely, for a degree-$d$ multilinear polynomial $F$ of the form
\begin{equation} \label{eq:multilinear-polynomial}
F(x_1, \ldots, x_n) = \sum_{S \subseteq [n], |S| \le d} a_S x^{S}\,, \qquad \text{ where }\qquad x^S = \prod_{i \in S} x_i\,,
\end{equation}
define $r(F)$ as the maximum $r$ such that there exist pairwise disjoint sets $S_1, \ldots, S_r \subseteq [n]$ of size $d$ such that $a_{S_i} \neq 0$ for all $i \in [r]$.

\begin{theorem}[Pointwise anticoncentration] \label{thm:pointwise-anticoncentration}
Let $F$ be a multilinear polynomial of degree $d \ge 1$ with complex coefficients. Then, for some absolute constant $C$,
\begin{equation*}
    \PP\big[F(\xi_1, \ldots, \xi_n) = 0\big] \le \frac{C 2^d \sqrt{d}}{\sqrt{r(F)}}\,.
\end{equation*}
\end{theorem}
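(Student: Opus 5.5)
The plan is to reduce the anticoncentration statement to a bound on total influence on the Boolean cube, and to obtain that bound from an influence estimate for bounded-degree rational functions. I expect the latter to be the heart of the argument.

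\textbf{Reduction by restriction.} Fix pairwise disjoint sets $S_1,\dots,S_r$ of size $d$ with $a_{S_i}\neq 0$, where $r=r(F)$. In each $S_i$ choose one distinguished variable $t_i=\xi_{j_i}$. Condition on all the other variables: those outside $\bigcup_i S_i$ and those in $S_i\setminus\{j_i\}$. The restricted polynomial $G(t_1,\dots,t_r)$ is multilinear in the $t_i$. It can be written as $G=t_iA_i+B_i$, where $A_i,B_i$ do not depend on $t_i$.

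Consider the coefficient of $t_i$ in $\partial G/\partial t_i$ restricted to monomials using no other distinguished variables. It is a nonzero multilinear polynomial of degree $d-1$ in the conditioned variables of $S_i$, since it contains the top monomial $a_{S_i}x^{S_i\setminus\{j_i\}}$. Hence it is nonzero with probability at least $2^{-(d-1)}$. These events are independent across $i$, because the $S_i$ are disjoint. By Chernoff, with high probability at least $r'\ge r/2^{d+1}$ coordinates $i$ are \emph{good}, meaning $A_i\not\equiv 0$ as a polynomial in $t$. This is the source of the factor $2^d$. It therefore suffices to prove $\PP[G=0]\le C\sqrt{d/r'}$ for a multilinear $G$ of degree $\le d$ on $\{\pm1\}^{r}$ with $r'$ good coordinates.

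\textbf{Counting boundary edges.} Let $Z=\{t:G(t)=0\}$ and $p=\PP[t\in Z]$. If $t\in Z$ and the flipped point $t^{(i)}$ also lies in $Z$, then $A_i(t)=B_i(t)=0$. Hence, for $t\in Z$, every good $i$ with $A_i(t)\neq0$ is a sensitive coordinate of $\mathbf 1_Z$ at $t$. On $Z$ we have $t_i=R_i(t):=-B_i(t)/A_i(t)$ whenever $A_i(t)\ne0$. This suggests relating $\mathbf 1_Z$ to the rational functions $R_i$, of degree $\le d$, or to a single function such as $\prod_i\frac{1}{2}(1+t_iR_i)$ suitably interpreted.

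Two steps then combine:
\begin{itemize}
\item a lower bound $\Inf(\mathbf 1_Z)\gtrsim p\,r'$, after handling the points where many $A_i$ vanish;
\item an upper bound $\Inf(\mathbf 1_Z)\lesssim p\sqrt{d\,r}$.
\end{itemize}
Together they give $p\,r'\lesssim p\sqrt{dr}$ "per unit density". The exact normalization needs care, since the naive edge isoperimetric inequality only gives $p\log(1/p)$. So the upper bound must be the influence estimate for rational functions conditioned on $Z$, of the form $\EE[\mathbf 1_Z\,\#\{i: R_i\text{ sensitive}\}]\le C\sqrt{d r}\,\PP[Z]^{\,\cdot}$. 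The form I would aim for is: for any rational function $R=P/Q$ of degree $\le d$ on $\{\pm1\}^r$ and any event, the total influence of $\mathrm{sgn}$-type indicators $\{t_i=R_i(t)\}$ is $O(\sqrt{d r})$, in the spirit of the Gotsman--Linial / Kane bound for polynomial threshold functions.

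\textbf{Handling degenerate points.} The points of $Z$ where $A_i(t)=0$ for many good $i$ I would treat by induction on $d$. There $\partial_i G$ vanishes, and $\partial_iG$ has degree $d-1$ with a matching of size $\gtrsim r'$. So the inductive hypothesis bounds the probability of such points by $O(2^{d-1}\sqrt{d}/\sqrt{r})$, and this fits in the claimed constant after averaging over $i$ with Markov's inequality.

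\textbf{Main obstacle.} The hardest step is the rational-function influence estimate: showing that a degree-$d$ rational function, or the indicator that $t_i$ agrees with it, has total influence $O(\sqrt{dr})$ rather than $O(d\sqrt r)$ or worse. My first attempt would mimic Kane's argument. Write the condition $G=0$ as $P=0$ with $P$ a polynomial of degree $d$. Bound the average sensitivity of $P$ via the Cauchy--Schwarz/derivative inequality $\sum_i\EE|\partial_iP|\le \sqrt{r\sum_i\EE|\partial_iP|^2}$, and compare $\sum_i\EE|\partial_iP|^2$ with $d\,\EE|P|^2$ on the relevant conditioned region.
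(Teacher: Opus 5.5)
Your reduction and its bookkeeping do not hold up, and the step you call the main obstacle, which carries all the difficulty, is left open.

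\textbf{Problems with the reduction.}
\begin{itemize}
\item After conditioning down to one variable per block, the claim you reduce to is false. That claim is $\PP[G=0]\le C\sqrt{d/r'}$ for every multilinear $G$ of degree $\le d$ with $r'$ coordinates satisfying $A_i\not\equiv 0$. But $G=(1-t_1)(t_2+\cdots+t_r)$ has degree $2$ and all $r$ coordinates good, yet $\PP[G=0]\ge\tfrac12$.
\item The induction meant to absorb such degenerate points assumes that $\partial_i G$ has a matching of size $\gtrsim r'$, and nothing guarantees this. Here $\partial_i G=1-t_1$ for $i\ge 2$. Already for $F=\sum_i x^{S_i}$ one has $\partial_{j_i}F=x^{S_i\setminus\{j_i\}}$.
\item The Chernoff step assumes independence that does not hold. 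The coefficient of $t_i$ involves conditioned variables from other blocks and from outside the matching.
\item Your two bullet points are incompatible. For $d=1$ and $G=t_1+\cdots+t_r$ one has $\Inf[\mathbf 1_Z]=2rp\asymp\sqrt r$ while $p\sqrt r\asymp 1$. In general, $pr'\lesssim\Inf[\mathbf 1_Z]\lesssim p\sqrt{dr}$ would force $Z=\emptyset$ once $r'\gg\sqrt{dr}$.
\item The true bound (\cref{thm:influence-pointwise}) is $\Inf[\mathbf 1_{P\neq 0}]=O(d\sqrt n)$, with no factor $p$.
\item Your target $O(\sqrt{dr})$ for bounded degree-$d$ rational functions is false for large $d$. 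Take $p_0=\prod_{i\le d}(t_1+\cdots+t_r-4i)$: for fixed $d$ and large even $r$ it has $\Inf[\mathbf 1_{p_0\neq 0}]\asymp d\sqrt r$, and $\mathbf 1_{p_0\neq0}$ is the limit of the degree-$d$ rational functions $p_0/(p_0+\eps)$.
\item The Kane-style sketch gives no such bound. Cauchy--Schwarz on $\sum_i\EE|\partial_iP|$ measures the scale-dependent variation of $P$, not how often $\mathbf 1_{P=0}$ flips. Kane's actual theorem is what Meka--Nguyen--Vu used, and it loses a factor $(\log r)^{O_d(1)}$.
\end{itemize}

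\textbf{The missing idea: the linear-algebraic estimate (\cref{lemma:fourier}).} Let $f=p/q$ with $\deg p,\deg q\le d$, and let $A_{S_1,S_2}=\hat f(S_1\triangle S_2)$ be the matrix of multiplication by $f$. Its operator norm is $\|f\|_\infty$. Its block with rows $|S_1|>k$ and columns $|S_2|\le k$ has rank at most $\sum_{k-d<\ell\le k}\binom n\ell$, because $qV_{k-d}\subseteq V_k$ lies in its kernel: $f\cdot qh=ph\in V_k$. Bound the squared Frobenius norm by rank times squared operator norm and sum over $k$. This gives $\sum_S\lambda_{|S|}\hat f(S)^2\le d\|f\|_\infty^2$ with $\lambda_m\asymp\sqrt m$, hence $\Inf[f]=O(d\sqrt n\,\|f\|_\infty^2)$.

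\textbf{How the paper uses it.} First rescale so that $\min\{|a_S|:|S|=d,\ a_S\neq 0\}=1$, and condition on the variables outside the matching, so that $n=rd$. The estimate is then applied not to an indicator but to the bounded rational function $f=1/(1+|F|^2)$, whose denominator has degree $2d$, on this full cube.

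Your good-coordinate analysis is replaced by a per-block statement (\cref{lemma:anticoncentration-influence}). The restriction $F_i$ to block $S_i$ has top coefficient $a_{S_i}$ with $|a_{S_i}|\ge 1$. So if $|F_i|\le\tfrac12$ somewhere, then $|F_i|\ge 1$ somewhere else on the block. Cauchy--Schwarz over the block's $2^d$ points then gives $\PP[|F_i|\le\tfrac12]\le\frac{64\cdot 2^d}{d+1}\Inf[f_i]$.

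Summing over the $r$ blocks yields $r\,\PP[|F|\le\tfrac12]=O(2^d\sqrt{rd})$, and this event contains $\{F=0\}$. Keeping whole blocks is what yields the factor $2^d\sqrt d$, and it removes any need for induction on $d$.
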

Note that if $F$ has $m n^{d-1}$ nonzero degree-$d$ coefficients, then one can greedily find $r = \lceil m/d\rceil$ disjoint sets $S_1, \ldots, S_{r}$ of size $d$ such that $a_{S_i} \neq 0$ for each $i \in [r]$, which means that $r(F) \ge m/d$. Thus, \cref{thm:pointwise-anticoncentration} indeed implies \cref{eq:nguyen-vu}. 

More generally, for $\gamma > 0$ and a degree-$d$ multilinear polynomial $F$ as in \cref{eq:multilinear-polynomial}, define $r_{\gamma}(F)$ as the maximum $r$ such that there exist pairwise disjoint sets $S_1, \ldots, S_r \subseteq [n]$ of size $d$ such that $|a_{S_i}| \ge \gamma$ for all $i \in [r]$. 
\cref{thm:pointwise-anticoncentration} follows from the slightly stronger \cref{thm:small-ball-anticoncentration} by taking $\gamma \to 0$. 

\begin{theorem}[Small-ball anticoncentration] \label{thm:small-ball-anticoncentration}
Let $F$ be a multilinear polynomial of degree $d \ge 1$ with complex coefficients. Then, for every $\gamma > 0$ and some absolute constant $C$,
\begin{equation*}
    \PP\big[|F(\xi_1, \ldots, \xi_n)| \le \gamma\big] \le \frac{C 2^d \sqrt{d}}{\sqrt{r_{\gamma}(F)}}\,.
\end{equation*}
\end{theorem}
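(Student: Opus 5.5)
We reduce to a total-influence estimate for rational functions of low degree, combined with a random restriction that turns the matching of big monomials into many independent "active" coordinates. Fix disjoint $S_1,\dots,S_r$ of size $d$ with $|a_{S_i}|\ge\gamma$, where $r=r_\gamma(F)$.

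\emph{Step 1: the random restriction.} In each $S_i$ choose a uniformly random distinguished variable $j_i\in S_i$. Condition on all variables outside $\{j_1,\dots,j_r\}$. Let $A$ be the event that this conditioning leaves, for many $i$, a nonnegligible linear coefficient in $\xi_{j_i}$. Concretely, the conditional polynomial $G$ in $y_i=\xi_{j_i}$ is multilinear. For each $i$, the derivative $\partial_{y_i}G$ (with the other $y$'s set as well) is a degree-$(d-1)$ polynomial in the remaining variables, containing the monomial $a_{S_i}x^{S_i\setminus j_i}$.

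\emph{Step 2: influence and the influence bound.} Write $g=\mathbf 1[|F|\le\gamma]$. We aim to show that for each fixed $i$, the probability that flipping $\xi_{j_i}$ keeps $|F|\le\gamma$ on both sides is small, relative to the probability of $|F|\le\gamma$. For that we need a lower bound on $|\partial_{j_i}F|=|F(\xi^{j_i\to1})-F(\xi^{j_i\to-1})|/2$. If both values lie in the $\gamma$-ball, then $|\partial_{j_i}F|\le\gamma$. Now $\partial_{j_i}F$ has coefficient $a_{S_i}$ on $x^{S_i\setminus j_i}$, with $|a_{S_i}|\ge\gamma$. By induction on $d$, applied to the degree-$(d-1)$ polynomial $\partial_{j_i}F$ (which retains a large matching only through $S_i$), this is not directly enough.

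The better route is the paper's key tool: an estimate on the total influence of the bounded-degree rational function $R=F/\partial F$, or rather the statement that any set of the form $\{|F|\le\gamma\}$ has total influence $O_d(\sqrt n)$-type controlled along the $r$ distinguished directions. The plan is to use the Harper/edge-isoperimetric (Kruskal--Katona/LYM) route. If $\mu=\PP[|F|\le\gamma]$, then the set $\{|F|\le\gamma\}$, restricted to the hypercube in coordinates $y_1,\dots,y_r$, has density $\mu$ on average. On the other hand, along each direction $i$, a boundary edge must occur unless $|\partial_{j_i}F|\le\gamma$. I would invoke the influence estimate (the conjecture of Kothari et al.\ resolved in the paper) to bound the number of directions in which the set is ``closed'' by $O_d(1)$ times a Sperner-type quantity. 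Then Erd\H{o}s's LYM argument, applied along the chains in $\{\pm1\}^r$, gives density at most $O(2^d\sqrt d/\sqrt r)$. The factor $2^d$ comes from the probability $\ge 2^{-(d-1)}$ that the remaining variables of $S_i$ make the monomial's contribution unsuppressed, and $\sqrt d$ from the choice of $j_i$.

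\emph{Main obstacle.} The hard step is Step 2: showing that $\{|F|\le\gamma\}$ behaves like an antichain in the $y$-cube, i.e.\ that few edges in the distinguished directions stay inside the set. I would attempt this first by proving $\EE[\#\{i:|\partial_{j_i}F|\le\gamma\}\mid |F|\le\gamma]\le r-cr$, using an anticoncentration bound for the single coefficient $a_{S_i}$ via the last coordinate of $S_i$. Then I would feed the remaining $cr$ ``sign-changing'' directions into the Erd\H{o}s chain argument, so that each chain meets the set in $O_d(1)$ points. The tension is that the sign of $\partial_{j_i}F$ varies along chains, so monotonicity fails. To control the number of sign changes along a chain, I would bound the total influence of the rational function $F/\partial_{j_i}F$, and this is where the rational-function influence estimate is needed.
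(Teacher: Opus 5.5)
\textbf{There is a genuine gap.} The step you flag as the ``main obstacle'' holds all of the difficulty, and the mechanisms you propose for it would not work.
\begin{itemize}
\item \emph{The restriction does not linearize the problem.} After your random restriction, $G(y_1,\dots,y_r)$ is in general still of degree $d$, because monomials of $F$ may contain distinguished variables from several blocks. It also has complex coefficients. So the sign-normalization that makes a level set of a real linear form an antichain is unavailable. For nonlinear polynomials a single maximal chain can meet the small-ball set in about half of its points: for $\sum_{k=1}^m x_{2k-1}x_{2k}-m$, the chain flipping $x_1,x_2,\dots$ in order hits the zero set at every other step.
\item \emph{The repair via $F/\partial_{j_i}F$ fails.} First, $\partial_{j_i}F$ can vanish on the cube, so this ratio is not a bounded (or even well-defined) function, and no $\|\cdot\|_\infty$-type influence bound applies. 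Second, complex values have no sign. Third, a total-influence bound is an average over the cube, not a per-chain count of sign changes.
\item \emph{The conditional estimate is unsupported.} The bound $\EE[\#\{i:|\partial_{j_i}F|\le\gamma\}\mid |F|\le\gamma]\le(1-c)r$ cannot come from an unconditional anticoncentration bound for the single coefficient $a_{S_i}$, since the conditioning event may have tiny probability.
\item \emph{The influence statement you cite is the wrong one.} The Kothari et al.\ statement concerns $\mathbf 1_{p\neq 0}$. The indicator of $\{|F|\le\gamma\}$ is a polynomial threshold function of $\gamma^2-|F|^2$. Bounding its influence is a special case of the Gotsman--Linial conjecture, which the rational-function estimate does not give.
\end{itemize}

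\textbf{The missing idea} is to replace the indicator by the \emph{bounded rational function} $f=1/(1+|F|^2)$. Before doing so, rescale to $\gamma=1$, and reduce from radius $1$ to radius $\tfrac12$ by covering the unit disk with $O(1)$ smaller disks.
\begin{itemize}
\item \emph{Global bound.} Condition on all variables outside $S_1\cup\dots\cup S_r$, so that $n=rd$. Since $|F|^2=F\overline F$ is a real polynomial of degree at most $2d$ and $\|f\|_\infty\le 1$, the rational-function influence theorem gives $\Inf[f]=O(d\sqrt n)$.
\item \emph{Local lemma.} No chains or distinguished variables are needed; instead, restrict $F$ to an entire block $S_i$. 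Suppose the restriction $F_i$ has a point with $|F_i|\le\tfrac12$. Because its top coefficient satisfies $|a_{S_i}|\ge 1$, it also has a point with $|F_i|\ge 1$. Hence $f_i$ varies by more than $\tfrac14$ on $\{-1,1\}^d$. Cauchy--Schwarz on the Fourier expansion then forces $\Inf[f_i]\ge (d+1)/(64\cdot 2^d)$.
\item \emph{Summing.} Using $\Inf[f]=\sum_i \EE_i\Inf[f_i]$ gives $r\cdot\PP[|F|\le\tfrac12]\le O(2^d/d)\cdot O(d\sqrt{rd})$. This is the claimed bound $O(2^d\sqrt{d/r})$.
\end{itemize}
Your heuristic sources for the factors $2^d$ and $\sqrt d$ correspond roughly to this local lemma and to $n=rd$. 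However, the local-to-global step has to go through the total influence of a bounded rational function, not an antichain argument.
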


\begin{remark}
Naturally, one can ask about analogues of \cref{thm:pointwise-anticoncentration,thm:small-ball-anticoncentration} for polynomials of i.i.d.\ random variables that are not Rademacher. When these random variables are ``not too close to being constant'', one can reduce this question to the Rademacher case via a conditioning argument analogous to \cite[Section 11]{kwan-sauermann}.
\end{remark}

Our next result, \cref{thm:pointwise-anticoncentration-robust}, is a different extension of \cref{thm:pointwise-anticoncentration} (inspired by \cite{kwan-sauermann}) that replaces $r(F)$ with a different parameter $s(F)$: the number of variables on which $F$ ``robustly depends''. Namely, for a multilinear polynomial $F$ in $n$ variables, define $s(F)$ as the maximum integer $s \le n+1$ such that for every set $I \subseteq [n]$ of size less than $s$ and every possible assignment of $\pm 1$ values to the variables $(x_i)_{i \in I}$, the resulting polynomial in the remaining variables is not identically zero. \cref{thm:pointwise-anticoncentration-robust} follows from \cref{thm:pointwise-anticoncentration} by a simple inductive argument, but might be more convenient to use when one has additional information about the polynomial beyond its top-degree coefficients. 

\begin{theorem} \label{thm:pointwise-anticoncentration-robust}
Let $F$ be a multilinear polynomial of degree $d \ge 1$ with complex coefficients. Then, for some absolute constant $C_1$,
\begin{equation*}
    \PP\big[F(\xi_1, \ldots, \xi_n) = 0\big] \le \frac{C_1 2^d d}{\sqrt{s(F)}}\,.
\end{equation*}
\end{theorem}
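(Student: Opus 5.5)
The plan is to reduce \cref{thm:pointwise-anticoncentration-robust} to \cref{thm:pointwise-anticoncentration} by induction on the degree $d$, splitting into two cases according to whether $r(F)$ is large compared to $s(F)/d$. Write $s = s(F)$ and $r = r(F)$. We may assume $s$ exceeds a large constant, since otherwise the bound exceeds $1$ once $C_1$ is chosen large. To make the induction work for polynomials whose degree drops, we prove the statement for all multilinear polynomials of degree at most $d$, with the convention that the degree-$0$ case is trivial. Indeed, if $s(G) \ge 1$ then $G$ is not identically zero, so a constant $G$ is a nonzero constant and vanishes with probability $0$.

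\emph{Case 1: $r \ge s/(4d)$.} Then \cref{thm:pointwise-anticoncentration} gives
\[
\PP[F(\xi)=0] \le \frac{C 2^d\sqrt d}{\sqrt{s/(4d)}} = \frac{2C\,2^d d}{\sqrt s},
\]
which is acceptable provided $C_1 \ge 2C$.

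\emph{Case 2: $r < s/(4d)$.} Take a maximum family of pairwise disjoint sets $S_1,\dots,S_r$ of size $d$ with $a_{S_i}\neq 0$, and let $U = S_1\cup\dots\cup S_r$. Then $|U| = rd < s/4$. By maximality, every degree-$d$ monomial with nonzero coefficient meets $U$. Condition on the values $(\xi_i)_{i\in U}$. For each fixed assignment $\sigma \in \{\pm1\}^U$, the restricted polynomial $G_\sigma$ in the variables outside $U$ is multilinear of degree at most $d-1$, because every top-degree monomial loses at least one variable.

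The key observation is that $s(G_\sigma) \ge s - |U|$. Any set $I$ of fewer than $s-|U|$ remaining variables, together with any $\pm1$ assignment on it, combines with $\sigma$ to give an assignment on $I \cup U$. This set has size less than $s$, so the corresponding restriction of $F$, which equals the restriction of $G_\sigma$, is not identically zero. Hence $s(G_\sigma) \ge 3s/4$.

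Applying the induction hypothesis for degree at most $d-1$ to each $G_\sigma$ and averaging over $\sigma$ gives
\[
\PP[F(\xi)=0] \le \max_\sigma \PP[G_\sigma = 0] \le \frac{C_1 2^{d-1}(d-1)}{\sqrt{3s/4}} \le \frac{C_1 2^d d}{\sqrt s},
\]
since $2^{-1}\cdot (4/3)^{1/2} < 1$. For $\deg G_\sigma = 0$ the probability is $0$, and for $1 \le \deg G_\sigma = d' < d$ the bound $C_1 2^{d'}d'/\sqrt{s(G_\sigma)}$ is only smaller.

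The main point requiring care is the bookkeeping of the induction rather than any real obstacle. One must check that the inequality $s(G_\sigma)\ge s-|U|$ holds exactly as defined, including the cap $s\le n+1$ when few variables remain: if $n-|U| < s-|U|-1$ is impossible by the cap, the count works out. One must also choose the threshold $s/(4d)$ so that both cases close with the same constant $C_1 = \max(2C, C_0)$, where $C_0$ handles small $s$. The factor $2^d$ in the bound is exactly what absorbs the loss $\sqrt{4/3}$ from the inductive step. The extra factor $\sqrt d$ relative to \cref{thm:pointwise-anticoncentration} pays for converting $r \ge s/(4d)$ into a bound in terms of $s$.
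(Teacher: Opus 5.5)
Your proof is correct and matches the paper's argument: induction on $d$, applying \cref{thm:pointwise-anticoncentration} when $r(F)$ is at least a constant multiple of $s(F)/d$, and otherwise conditioning on the union of a maximum disjoint family of top-degree monomials, which lowers the degree and keeps $s$ above a constant fraction of $s(F)$. The only differences are that the paper uses the threshold $s(F)/(2d)$, giving $C_1=\sqrt{2}C$, where you use $s(F)/(4d)$, and that your preliminary reduction to large $s$ (and the constant $C_0$) is unnecessary, since nothing in the argument uses it.
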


As observed in \cite{meka-nguyen-vu}, the exponential dependence on $d$ in \cref{thm:pointwise-anticoncentration,thm:small-ball-anticoncentration,thm:pointwise-anticoncentration-robust} is unavoidable. Namely, one can check that if $2^d \mid r$, then
$$
\PP\left[\left(\sum_{i=1}^r \prod_{j=1}^d (1 + \xi_{i, j})\right) - r = 0\right] = \Theta\left(\frac{2^{d/2}}{\sqrt{r}}\right)\,.
$$
However, the specific dependence on $d$ in these theorems is likely non-optimal. 

\begin{remark}
The polynomial Littlewood--Offord problem has found a number of applications to other questions in combinatorics and computer science, and hence our results have immediate implications for some of them. We briefly mention two such applications:
\begin{itemize}[leftmargin=2em]
    \item \emph{Polynomial approximations of the parity function.} Razborov and Viola~\cite{razborov-viola} proposed a ``challenge'' to construct an \emph{explicit} Boolean function that cannot be approximated by polynomials of degree at most $\log_2 n$. They suggested the parity function as a candidate, and proved that $\PP[F(\xi_1, \ldots, \xi_n) = \prod_{i=1}^n \xi_i] \le 1/2$ for every polynomial $F$ of degree at most $\frac{1}{2} \log_2 \log_2 n$ and sufficiently large $n$. Meka, O.\ Nguyen, and Vu~\cite{meka-nguyen-vu} extended this result to polynomials of degree at most $\frac{\log n}{15 \log \log n}$. Plugging the bound given by \cref{thm:pointwise-anticoncentration} into the approach of Razborov and Viola extends it further to polynomials of degree at most $c \log_2 n$ for any fixed $c < 1/2$: indeed, if $r(F) \ge n/\log_2 n$ then the desired bound follows from \cref{thm:pointwise-anticoncentration} applied to $F-1$ and $F+1$; otherwise, we can condition on the outcomes of $n/2$ variables to reduce the degree of our polynomial and iterate the argument.
    \item \emph{Edge-statistics in large hypergraphs.} The connection between this topic and the polynomial Littlewood--Offord problem was first observed by Kwan, Sudakov, and Tran~\cite{kwan-sudakov-tran}, and later used in \cite{grebennikov-kwan-edge-statistics,jain-kwan-mubayi-tran,kwan-sauermann}. Using this connection, Jain, Kwan, Mubayi, and Tran~\cite{jain-kwan-mubayi-tran} proved that the fraction of $k$-vertex subsets in a large $r$-uniform hypergraph that induce exactly $\ell \le \frac{1}{2}\binom{k}{r}$ edges is at most $(\ell/k^{r-1})^{-1/2 + o(1)}$. Our results remove the ``$o(1)$-loss'', yielding the optimal bound of $O_r((\ell/k^{r-1})^{-1/2})$.
\end{itemize}
\end{remark}

Another research direction, initiated by Costello~\cite{costello} and further developed in \cite{grebennikov-kwan-littlewood-offord,jin-kwan-sauermann-wang}, is to study the conditions on the polynomial that force \emph{better} anticoncentration than predicted by the Nguyen--Vu conjecture. For instance, Jin, Kwan, Sauermann, and Wang conjectured~\cite[Conjecture 1.7]{jin-kwan-sauermann-wang} that if a degree-$d$ polynomial is ``robustly irreducible'' (i.e., cannot be made reducible by modifying $o(n^d)$ of its coefficients), then one can get a power-saving improvement over \cref{eq:nguyen-vu}. It is not clear to us whether our approach can be used to make progress on this or other related questions.

\subsection{Total influence of bounded-degree rational functions} For a function $f: \{-1, 1\}^n \to \mathbb{R}$, its \emph{total influence} is defined as
\begin{equation*}
    \Inf[f] = \sum_{i=1}^n \EE\big[(f(\xi) - f(\xi^{(i)}))^2\big]\,,
\end{equation*}
where $\xi = (\xi_1, \ldots, \xi_n)$ is a uniformly random vector in $\{-1, 1\}^n$, and $\xi^{(i)} = (\xi_1, \ldots, \xi_{i-1}, -\xi_i, \xi_{i+1}, \ldots, \xi_n)$.

As in the work of Meka, O.\ Nguyen, and Vu \cite{meka-nguyen-vu}, our proof proceeds by establishing a bound on the total influence of certain functions on the Boolean cube. However, the class of functions we consider is different.
In \cite{meka-nguyen-vu}, the authors work with polynomial threshold functions, i.e.\ functions of the form
$$\mathbf{1}_{p > 0}(x) = \begin{cases} 1, & \text{if } p(x) > 0; \\ 0, & \text{if } p(x) \le 0; \end{cases} \qquad \text{ for each $x = (x_1, \ldots, x_n) \in \{-1, 1\}^n$}$$
for a real polynomial $p$. The Gotsman--Linial conjecture states that\footnote{In their original paper \cite{gotsman-linial}, Gotsman and Linial conjectured a more precise bound, which was later disproved
\cite{chapman,kim-maldonado-wellens}.} 
\begin{equation} \label{eq:gotsman-linial}
\Inf[\mathbf{1}_{p > 0}] = O(\deg(p) \sqrt{n}),
\end{equation}
and the authors of \cite{meka-nguyen-vu} used a bound towards this conjecture due to Kane \cite{kane} to derive their anticoncentration result.

We instead work with rational functions. In short, their relevance to the problem can be explained as follows: for the polynomial Littlewood--Offord problem, it suffices to control the total influence of $\mathbf{1}_{p \neq 0}$ rather than $\mathbf{1}_{p > 0}$, and $\mathbf{1}_{p \neq 0}$ can be approximated by the rational functions $\tfrac{p}{p + \eps}$ as $\eps \to 0$. Our next theorem gives an upper bound on the total influence of rational functions.

\begin{theorem} \label{thm:influence}
Suppose that a function $f: \{-1, 1\}^n \to \mathbb{R}$ can be written as $f = p/q$, where $p$ and $q$ are real polynomials of degree at most $d$ in $n$ variables (in particular, $q(x) \neq 0$ for all $x \in \{-1, 1\}^n$). Then, 
\begin{equation*}
    \Inf[f] \le \left(4\sqrt{2\pi} + O(1/n)\right) d \sqrt{n} \|f\|^2_{\infty}\,,
\end{equation*}
where $\|f\|_{\infty} = \max_{x \in \{-1, 1\}^n} |f(x)|$.
\end{theorem}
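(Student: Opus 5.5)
The plan is to reduce the $\ell^2$ influence to an $\ell^1$ quantity, convert that into total variation along one-dimensional paths in the cube, and use the rational structure to bound the variation along each path. This is a plan rather than a checked proof, and the third step below is where I expect it could fail.

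\emph{Step 1: from $\ell^2$ to $\ell^1$.} Since $|f(\xi)-f(\xi^{(i)})|\le 2\|f\|_\infty$, we have
\begin{equation*}
\Inf[f]\le 2\|f\|_\infty\sum_{i=1}^n \EE\big|f(\xi)-f(\xi^{(i)})\big|.
\end{equation*}
So it suffices to show that the $\ell^1$-influence is at most $(2\sqrt{2\pi}+O(1/n))\,d\sqrt n\,\|f\|_\infty$.

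\emph{Step 2: from $\ell^1$-influence to chains.} Write the $\ell^1$-influence as a sum over edges of the cube, and average over all maximal monotone chains from a random antipodal pair $-y$ to $y$. Concretely, fix $y$ uniformly at random and a uniformly random permutation $\sigma$. Let $x^{(0)}=-y$, and let $x^{(k)}$ be obtained by flipping the coordinates $\sigma(1),\dots,\sigma(k)$. An edge between layers $k$ and $k+1$ (measured from $-y$) is used by a random chain with probability $\big(\binom{n}{k}(n-k)\big)^{-1}$, while it carries weight $2^{-n}$ in the influence sum. Comparing these weights layer by layer, the central layers dominate, with ratio
\begin{equation*}
\max_k 2^{-n}\binom{n}{k}(n-k)\approx n\sqrt{2/(\pi n)}.
\end{equation*}
To turn this into a bound, I would restrict attention to a window of central layers (or use a weighted choice of starting layer). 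This is Harper/Erd\H{o}s-type bookkeeping, and the Gaussian factor $\sqrt{2\pi}$ in the statement should come from exactly this computation. The outcome I aim for is
\begin{equation*}
\sum_i\EE|f(\xi)-f(\xi^{(i)})|\le \big(\sqrt{\pi/2}+O(1/n)\big)\sqrt n\;\EE_{y,\sigma}\,\mathrm{TV}(f;\text{chain}),
\end{equation*}
where $\mathrm{TV}(f;\text{chain})=\sum_{k}|f(x^{(k+1)})-f(x^{(k)})|$. The remaining constant $4\sqrt{2\pi}$ in the theorem is $2\cdot\sqrt{\pi/2}\cdot 4$ if Step 3 gives $4d$.

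\emph{Step 3 (main obstacle): variation along a chain is $O(d)\|f\|_\infty$.} A real rational function $P/Q$ of one variable, with $\deg P,\deg Q\le d$ and no poles on an interval, changes monotonicity at most $O(d)$ times there. Its total variation is therefore at most about $4d\|f\|_\infty$: each monotone piece contributes at most $2\|f\|_\infty$, and $P'Q-PQ'$ has degree at most $2d-1$, which gives at most about $2d$ pieces. The difficulty is that a discrete chain is not an algebraic curve, so this fact cannot be applied to it directly. My first attempt would be to realize the chain, in expectation over $\sigma$, as a univariate object. Group the coordinates into consecutive blocks along $\sigma$ and substitute $x_i=\pm t$ on the not-yet-decided block. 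By multilinearity, $p$ and $q$ restricted to such a line are polynomials in $t$ of degree at most $d$. The Boolean endpoints $t=\pm1$ recover consecutive chain points, and the intermediate real values interpolate between them.

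The danger is that $q$ may vanish at non-Boolean points on such a line, which would create poles in the interpolation. I would handle this as follows. Perturb $q$ to $q+\delta\,\mathrm{sgn}(q)$, which is legitimate on the cube, and compare the discrete variation with the continuous variation only at lattice points. Then count sign changes of the discrete derivative $k\mapsto f(x^{(k+1)})-f(x^{(k)})$ directly. To do this, write it as $\big(p(x^{(k+1)})q(x^{(k)})-p(x^{(k)})q(x^{(k+1)})\big)/\big(q(x^{(k)})\,q(x^{(k+1)})\big)$ and bound its sign changes via Descartes' rule or a Krawtchouk-type expansion in $k$, in expectation over $\sigma$. This sign-change count is the step I expect to need a genuinely new idea, and it is the step most likely to fail as formulated.
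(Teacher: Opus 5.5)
The proposal does not go through. Step~3 is false, and Step~2 hides this.

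When $y$ and $\sigma$ are both uniform, the law of the random chain is invariant under all automorphisms of the cube. Each chain uses $n$ of the $n2^{n-1}$ edges, so every edge is used with probability exactly $2^{1-n}$. Hence
$\EE_{y,\sigma}\mathrm{TV}(f;\text{chain})=2^{1-n}\sum_{\{x,x'\}}|f(x)-f(x')|=\sum_i\EE|f(\xi)-f(\xi^{(i)})|$
holds \emph{exactly}, with the sum over edges $\{x,x'\}$ of the cube. So the factor $\sqrt{\pi n/2}$ you extract from the layer-by-layer bookkeeping is spurious. Combined with Step~1, Step~3 as stated would give $\Inf[f]=O(d)\|f\|_\infty^2$ with no $\sqrt n$ at all.

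That is false already for $d=1$. Take $n$ even, $p=x_1+\dots+x_n$ and $f_\eps=p/(p+\eps)$ with $0<\eps<2$, so $\|f_\eps\|_\infty=2/(2-\eps)$.
\begin{itemize}
\item Along the chain, $p(x^{(k)})=\sum_{j\le k}z_j-\sum_{j>k}z_j$, where $z_j=y_{\sigma(j)}$ are i.i.d.\ uniform signs.
\item So for each $k$, $p(x^{(k)})=0$ with probability $2^{-n}\binom{n}{n/2}\approx\sqrt{2/(\pi n)}$.
\item Each visit to $0$ makes $f_\eps$ jump between $\approx 1$ and $0$.
\item As $\eps\to 0$ this gives $\EE\,\mathrm{TV}\to 2n\,2^{-n}\binom{n}{n/2}\approx 2\sqrt{2n/\pi}$.
\end{itemize}
This is the $d=1$ instance of the sharpness example for $\mathbf{1}_{p\neq 0}$ in the introduction.

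The univariate intuition fails because the block substitution $x_i=\pm t$ produces a different one-variable rational function on each block. You get $O(d)$ monotone pieces per block but no control on the number of blocks. No sign-change count via Descartes' rule or a Krawtchouk expansion can rescue a claim that has a counterexample. Step~1 also commits you to an $\ell^1$-influence bound, which is stronger than what the theorem needs.

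The missing idea is a way to use the rational structure that produces the $\sqrt n$ intrinsically. The paper does this spectrally rather than along paths.
\begin{itemize}
\item Let $A$ be the matrix of $\varphi\mapsto f\varphi$ in the basis $(x^S)$. Then $A_{S_1,S_2}=\hat f(S_1\triangle S_2)$ and $\|A\|=\|f\|_\infty$.
\item Let $V_j$ be the span of monomials of degree at most $j$. The block of $A$ with rows $|S_1|>k$ and columns $|S_2|\le k$ annihilates $qV_{k-d}\subseteq V_k$, because $f\cdot qh=ph\in V_k$. This subspace has dimension $\dim V_{k-d}$, since $q$ does not vanish on the cube.
\item Hence the block has rank at most $\sum_{\ell=k-d+1}^{k}\binom{n}{\ell}$, and its squared Frobenius norm is at most this rank times $\|f\|_\infty^2$.
\item Summing over $k$ and double counting gives $\sum_S\lambda_{|S|}\hat f(S)^2\le d\|f\|_\infty^2$, where $\lambda_m=\EE\max\{\xi_1+\dots+\xi_m,0\}\approx\sqrt{m/(2\pi)}$.
\end{itemize}
The $\sqrt n$ then enters only through $|S|\le(\sqrt{2\pi}+O(1/n))\sqrt n\,\lambda_{|S|}$, combined with $\Inf[f]=4\sum_S|S|\hat f(S)^2$.
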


\cref{thm:influence} does not appear to be strong enough to imply the Gotsman--Linial conjecture. However, we can deduce a weaker\footnote{To see that this statement is indeed weaker than the Gotsman--Linial conjecture, note that $\mathbf{1}_{p \neq 0} = \mathbf{1}_{p^2 > 0}$.} version of it proposed by Kothari, Kovacs-Deak, Wang, and Yang \cite[Conjecture 2]{KKWY}, which states that $\Inf[\mathbf{1}_{p \neq 0}] = O(\deg(p) \sqrt{n})$. 

\begin{theorem} \label{thm:influence-pointwise}
For a real polynomial $p$ in $n$ variables, we have
\begin{equation*}
    \Inf[\mathbf{1}_{p \neq 0}] \le \left(\sqrt{2\pi} + O(1/n)\right) \deg (p) \sqrt{n}.
\end{equation*}
\end{theorem}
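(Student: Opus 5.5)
We deduce \cref{thm:influence-pointwise} from \cref{thm:influence} by approximating $\mathbf{1}_{p \neq 0}$ with rational functions. Let $D = \deg(p)$. Replacing $p$ by its multilinearization does not change $\mathbf{1}_{p\neq 0}$ on the cube and does not increase the degree, so we may assume $p$ is multilinear. The natural first choice is $f_\eps = p/(p+\eps)$, but its denominator may vanish on the cube when $p$ takes negative values. We therefore square first and set
\begin{equation*}
    f_\eps = \frac{p^2}{p^2 + \eps}\,, \qquad \eps > 0.
\end{equation*}
The numerator and denominator are real polynomials of degree $2D$, and the denominator is strictly positive on $\{-1,1\}^n$. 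Also $0 \le f_\eps \le 1$, so $\|f_\eps\|_\infty \le 1$. As $\eps \to 0$, $f_\eps \to \mathbf{1}_{p \neq 0}$ pointwise on the finite cube, so $\Inf[f_\eps] \to \Inf[\mathbf{1}_{p\neq 0}]$. Applying \cref{thm:influence} with degree $2D$ gives
\begin{equation*}
    \Inf[\mathbf{1}_{p \neq 0}] \le \left(8\sqrt{2\pi} + O(1/n)\right) D\sqrt{n}.
\end{equation*}
This already confirms the Kothari--Kovacs-Deak--Wang--Yang conjecture up to a constant, but it is a factor $8$ worse than the claimed $\sqrt{2\pi}$.

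To recover the constant, I would exploit that the limit function is Boolean-valued. For a $\{0,1\}$-valued $g$ we have $(g(\xi)-g(\xi^{(i)}))^2 = |g(\xi)-g(\xi^{(i)})|$. So $\Inf[g]$ equals the $L^1$-type quantity $\sum_i \EE|g(\xi)-g(\xi^{(i)})|$, and this is linear rather than quadratic in the oscillation. The plan is to find, inside the proof of \cref{thm:influence}, the step that bounds the squared differences by $\|f\|_\infty$ times a first-order quantity. A factor $4 = 2^2$ presumably comes from $|f(\xi)-f(\xi^{(i)})| \le 2\|f\|_\infty$. For a $[0,1]$-valued $f$ this range is $1$ rather than $2$, which I expect saves the factor $4$. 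This gives $(2\sqrt{2\pi}+O(1/n))\deg(p^2)\sqrt{n}$, i.e.\ a constant $4\sqrt{2\pi}$ in terms of $D$. The remaining factor $4$ should come from avoiding the squaring: use $f_\eps = p/(p+\eps)$ with degree $D$, choosing $\eps$ so that $p+\eps$ never vanishes. Since $p$ takes finitely many values, this holds for all small $\eps$ outside a finite set. The cost is that $f_\eps$ is no longer bounded by $1$, since values of $p$ near $-\eps$ blow up.

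The main obstacle is this last step. I would handle it by restricting $\eps$ to be much smaller than $\min\{|p(x)| : p(x) \neq 0\}$, and taking $\eps$ of either sign. Then $f_\eps$ is within $O(\eps)$ of $\mathbf{1}_{p\neq 0}$ uniformly, so $\|f_\eps\|_\infty = 1 + o(1)$, and the denominator never vanishes. Combining this with the range-$1$ refinement of the argument of \cref{thm:influence}, at degree $D$, would give the constant $\sqrt{2\pi}$. If that refinement does not go through cleanly, the squared-denominator argument above still yields the theorem with a larger absolute constant in place of $\sqrt{2\pi}$.
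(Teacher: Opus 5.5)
Your setup matches the paper's: approximate $\mathbf{1}_{p\neq 0}$ by $f_\eps = p/(p+\eps)$ with $0<\eps<\delta := \min\{|p(x)| : p(x)\neq 0\}$, so the denominator never vanishes on the cube, and pass to the limit in the finite sum defining $\Inf$. The gap is the step that produces the constant $\sqrt{2\pi}$: you leave it as a hoped-for ``range-1 refinement'' of the proof of \cref{thm:influence}, and you guess the wrong mechanism for it. That proof never uses $|f(\xi)-f(\xi^{(i)})|\le 2\|f\|_\infty$. The factor $4$ comes from the identity $\Inf[f]=4\sum_S|S|\hat f(S)^2$, and $\|f\|_\infty^2$ enters only through the operator-norm bound $\|B\|\le\|f\|_\infty$. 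As written, you have proved the statement only with constant $8\sqrt{2\pi}$ (via $p^2/(p^2+\eps)$), not the stated one.

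Your factor bookkeeping is also off. Halving the range saves a factor $4$, and avoiding the squaring saves a factor $2$ (degree $2D$ versus $D$). So after the range refinement the squared approach already gives $2\sqrt{2\pi}D\sqrt n$, not $4\sqrt{2\pi}D\sqrt n$, and there is no ``remaining factor 4''.

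The missing idea is one line and needs no change to \cref{thm:influence}. Total influence is invariant under adding a constant, and $f_\eps - \tfrac12 = \frac{p-\eps}{2(p+\eps)}$ is still a ratio of real polynomials of degree at most $D$ with nonvanishing denominator. Apply \cref{thm:influence} to $f_\eps-\tfrac12$ and use $\|f_\eps-\tfrac12\|_\infty\to\tfrac12$. This gives $\Inf[\mathbf{1}_{p\neq0}]\le(4\sqrt{2\pi}+O(1/n))D\sqrt n\cdot\tfrac14$, which is exactly the claim and is the paper's proof.

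If you prefer to phrase it inside the argument, this is the precise form of your ``range'' intuition. In \cref{lemma:linear-algebra}, the left-hand side involves only $\hat f(S)$ with $S\neq\emptyset$, since $S_1\neq S_2$. Also $K_{\mc L}(f+c)=K_{\mc L}(f)$ for any constant $c$. Hence $\|f\|_\infty$ may be replaced there by $\min_c\|f-c\|_\infty$, which is half the oscillation of $f$.
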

Since the polynomial $p_0 = \prod_{i = 1}^d ((x_1 + \cdots + x_n) - 4i)$ satisfies $\Inf[\mathbf{1}_{p_0 \neq 0}] = (2\sqrt{2/\pi} + O_d(1/n)) d \sqrt{n}$ for every fixed $d$ and sufficiently large even $n$, the bound given by \cref{thm:influence-pointwise} is sharp up to a factor of $\pi/2 + O_d(1/n)$.

\subsection{Proof ideas} While our arguments are quite short, we still include a brief overview of the main ideas. We focus on the proof of \cref{thm:influence}, since the other results follow from it in a fairly standard way. This proof has some high-level similarities to the arguments of Aaronson, Ben-David, Kothari, Rao, and Tal~\cite[Section 4.1]{ABKRT}, 
although the setting and the details are different.

Perhaps the main idea of the proof (which was also used in \cite{ABKRT}) is to study pointwise multiplication by $f$ as a linear map from the vector space of functions $\{-1, 1\}^n \to \mathbb{R}$ to itself. The matrix $A$ of this map in the Fourier basis $(x^S)_{S \subseteq [n]}$ has entries $A_{S_1, S_2} = \hat{f}(S_1 \triangle S_2)$. Hence, recalling the standard identity $\Inf[f] = 4 \sum_{S \subseteq [n]} |S| \hat{f}(S)^2$, we need to bound a certain weighted sum of squares of the entries of $A$.

Using basic linear algebra, we can express the sum of squares of the entries of (a submatrix of) $A$ as the sum of squares of its singular values, which can in turn be bounded in terms of its rank and operator norm. 
Clearly, the operator norm of $A$ is at most $\|f\|_{\infty}$. While $A$ itself may have high rank, we can still show that it contains large submatrices of low rank. To see this, note that $A = P Q^{-1}$, where $P$ and $Q$ are the matrices of the linear maps given by pointwise multiplication by $p$ and by $q$, respectively. Since $p$ and $q$ are polynomials of degree at most $d$, the matrices $P$ and $Q$ have many zero entries: $P_{S_1, S_2} = Q_{S_1, S_2} = 0$ whenever $|S_1 \triangle S_2| > d$. This allows us to control, for each $k$, the rank of the submatrix of $A$ whose rows are indexed by sets of size greater than $k$ and whose columns are indexed by sets of size at most $k$. 
Then, we take a sum over all $k$ and use a double-counting argument to complete the proof.

\textbf{Acknowledgements.} The author is grateful to Zhihan Jin for insightful discussions of the proof, and to Matthew Kwan for helpful comments on an earlier version of this exposition. The author is supported by ERC Starting Grant “RANDSTRUCT” No. 101076777.

\bigskip

\underline{\textbf{Update 07.10.2026.}} Shortly after the first version of this note was made public, OpenAI~\cite{openai2026-gotsman-linial} released a proof of the Gotsman--Linial conjecture as stated in \cref{eq:gotsman-linial}. Combined with (a slight refinement of) the reduction from \cite{meka-nguyen-vu}, this gives an alternative proof of our main anticoncentration results (\cref{thm:pointwise-anticoncentration,thm:small-ball-anticoncentration,thm:pointwise-anticoncentration-robust}).
The proof in \cite{openai2026-gotsman-linial} is based on ideas roughly similar to those presented here, but also introduces several additional ingredients.

\section{The key lemma}

In this section, we prove the key estimate on the Fourier coefficients of a bounded-degree rational function (\cref{lemma:fourier}), which is the main novelty of our approach. 

Throughout the paper, we write $\xi_1, \ldots, \xi_n$ for i.i.d.\ Rademacher random variables, and $\xi = (\xi_1, \ldots, \xi_n)$ for a uniformly random vector in $\{-1, 1\}^n$.
The monomials $(x^S)_{S \subseteq [n]}$ form an orthonormal basis of the space of functions $\{-1, 1\}^n \to \mathbb{R}$ with respect to the inner product $\langle f, g \rangle =  \EE[f(\xi) g(\xi)]$. Thus, every function $f: \{-1, 1\}^n \to \mathbb{R}$ can be written as
$$f(x) = \sum_{S \subseteq [n]} \hat{f}(S) x^S, \qquad \text{ where } \qquad \hat{f}(S) = \EE[f(\xi) \xi^S]\,.$$

For an integer $m \ge 0$, define $\lambda_m = \EE\max\{\xi_1 + \ldots + \xi_m, 0\}$. Using the fact that $\lambda_{m+1} - \lambda_m = \frac{1}{2}\PP[\xi_1 + \ldots + \xi_m = 0]$ and Stirling's formula, one can check that for $m \ge 1$,
\begin{equation} \label{eq:stirling}
\lambda_m = \frac{m}{2^m}\binom{m-1}{\floor{\frac{m-1}{2}}} = \left(\frac{1}{\sqrt{2\pi}} + O(1/m)\right) \sqrt{m}\,.\end{equation}

\begin{lemma} \label{lemma:fourier}
    Suppose that a function $f: \{-1, 1\}^n \to \mathbb{R}$ can be written as $f = p/q$, where $p$ and $q$ are real polynomials of degree at most $d$ in $n$ variables (in particular, $q(x) \neq 0$ for all $x \in \{-1, 1\}^n$). Then, 
    \begin{equation*}
    \sum_{S \subseteq [n]} \lambda_{|S|} \hat{f}(S)^2 \le d \cdot \|f\|^2_{\infty}\,.
    \end{equation*}
\end{lemma}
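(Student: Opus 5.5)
The plan is to study pointwise multiplication by $f$ as a linear operator on the space $V$ of functions $\{-1,1\}^n \to \RR$. In the Fourier basis $(x^S)_{S \subseteq [n]}$ this operator has matrix $A$ with entries $A_{S,T} = \hat f(S \triangle T)$. Its operator norm is at most $\|f\|_\infty$, since the basis is orthonormal and $\|fg\|_2 \le \|f\|_\infty \|g\|_2$. For each integer $k$, let $B_k$ be the submatrix of $A$ with rows indexed by $|S| > k$ and columns indexed by $|T| \le k$. Equivalently, $B_k$ is the operator $V_{\le k} \to V_{>k}$ given by $g \mapsto \Pi_{>k}(fg)$, where $V_{\le k}$ is the span of monomials of degree at most $k$ and $\Pi_{>k}$ is the orthogonal projection onto the span of higher monomials. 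Then $\|B_k\| \le \|f\|_\infty$.

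The key step is a rank bound for $B_k$. Take any $g \in V_{\le k-d}$ and set $h = qg$, which lies in $V_{\le k}$ since $\deg q \le d$. Then $fh = pg$ also has degree at most $k$, so $B_k h = 0$. Because $q$ never vanishes on the cube, $g \mapsto qg$ is injective. Hence the kernel of $B_k$ has dimension at least $\dim V_{\le k-d}$, and
\[
\rank B_k \le \dim V_{\le k} - \dim V_{\le k-d} = N_k := \#\{T \subseteq [n] : k-d < |T| \le k\}.
\]
Writing the squared Frobenius norm as the sum of squared singular values then gives
\[
\sum_{|S|>k,\ |T|\le k} \hat f(S\triangle T)^2 = \|B_k\|_F^2 \le \rank(B_k)\, \|B_k\|^2 \le N_k \|f\|_\infty^2 .
\]

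Finally, we sum over all integers $k$ and double count. On the right, each set $T$ is counted in $N_k$ for exactly $d$ values of $k$, so $\sum_k N_k = d\,2^n$. On the left, the term $\hat f(U)^2$ appears once for each pair $(S,T)$ with $S \triangle T = U$ and each $k$ with $|T| \le k < |S|$; there are $\max(|S|-|T|,0)$ such $k$. Parametrize these pairs by $W \subseteq [n]\setminus U$ and a splitting $U = U_1 \sqcup U_2$, with $T = W \cup U_1$ and $S = W \cup U_2$. Then $|S|-|T| = |U_2|-|U_1|$, which is distributed as $\xi_1+\dots+\xi_{|U|}$ over a uniformly random splitting. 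So the total count is $2^{n-|U|} \cdot 2^{|U|}\lambda_{|U|} = 2^n\lambda_{|U|}$. Dividing both sides by $2^n$ yields $\sum_U \lambda_{|U|}\hat f(U)^2 \le d\|f\|_\infty^2$.

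The main obstacle, and the real idea, is the rank bound: one must see that the subspace $q \cdot V_{\le k-d}$ is mapped by multiplication by $f$ back into low degree. The rest is bookkeeping. I would check the edge cases: for $k<0$ the matrix $B_k$ has no columns, and for $k-d<0$ we have $V_{\le k-d} = 0$, both consistent with the definition of $N_k$.
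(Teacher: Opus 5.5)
Your proposal is correct and follows essentially the same route as the paper: the paper bounds the squared Frobenius norm of the block of the multiplication-by-$f$ matrix (rows $|S|>k$, columns $|T|\le k$) by rank times operator norm squared, shows $q\cdot V_{\le k-d}$ lies in the kernel, and then sums over $k$ and double counts, exactly as you do. The only differences are cosmetic. The paper packages the linear algebra as a separate lemma for an arbitrary family $\mathcal{L}$. It also sums over $0\le k\le n-1$ and bounds the right side by $d2^n$, whereas you sum over all integers $k$ and get $d2^n$ exactly. Finally, it computes the coefficient $\lambda_{|U|}$ via a uniformly random $T$, while you parametrize the pairs by $W$ and a splitting of $U$.
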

Our proof of \cref{lemma:fourier} has a linear-algebraic part and a combinatorial part. The linear-algebraic part is captured by the following lemma\footnote{As pointed out to us by Zhihan Jin, a similar linear-algebraic statement appears implicitly in the recent resolution of Chv\'atal's conjecture by Chang, H.\ Liu, and M.\ Liu~\cite{chang-liu-liu} and in the follow-up works \cite{ellis-filmus-friedgut,keevash}. We note that these results are independent of the current work and were also obtained with AI assistance.} that applies to arbitrary real-valued functions on the Boolean cube.

\begin{lemma} \label{lemma:linear-algebra}
    Let $\mc{L}$ be a family of subsets of $[n]$, and denote $V_{\mc{L}} = \Span_{\RR}\{x^S : S \in \mc{L}\}$. For a function $f: \{-1, 1\}^n \to \mathbb{R}$, also define the vector space $$K_{\mc{L}}(f) = \{h \in V_{\mc{L}} : fh \in V_{\mc{L}}\}\,.$$ Then,
    \begin{equation*}
    \sum_{\substack{S_1 \notin \mc{L} \\ S_2 \in \mc{L}}} \hat{f}(S_1 \triangle S_2)^2 \le \big(|\mc{L}| - \dim K_{\mc{L}}(f)\big) \cdot \|f\|^2_{\infty}\,.
    \end{equation*}
\end{lemma}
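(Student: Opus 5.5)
Let $M_f$ be the operator of pointwise multiplication by $f$ on the space of functions $\{-1,1\}^n \to \RR$. In the orthonormal Fourier basis $(x^S)$ its matrix is $A_{S_1,S_2} = \hat f(S_1 \triangle S_2)$, since $f x^{S_2} = \sum_T \hat f(T) x^{T \triangle S_2}$. Let $\Pi$ be the orthogonal projection onto $V_{\mc L}$ and $\Pi^\perp = I - \Pi$ the projection onto $V_{\mc L}^\perp = \Span\{x^S : S \notin \mc L\}$. The left-hand side of the lemma is then exactly the squared Hilbert--Schmidt (Frobenius) norm of the block $B = \Pi^\perp M_f \Pi$, viewed as an operator $V_{\mc L} \to V_{\mc L}^\perp$. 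Its rows are indexed by $S_1 \notin \mc L$ and its columns by $S_2 \in \mc L$.

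The plan is to bound $\|B\|_{HS}^2 \le \rank(B) \cdot \|B\|_{op}^2$. This holds because $\|B\|_{HS}^2$ is the sum of the squared singular values of $B$, and there are at most $\rank(B)$ nonzero ones, each at most $\|B\|_{op}$. Two facts then finish the proof.

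\textbf{Operator norm.} We have $\|B\|_{op} \le \|M_f\|_{op} = \|f\|_\infty$. The equality holds because $\EE[(fh)^2] \le \|f\|_\infty^2 \EE[h^2]$, and projections do not increase the norm.

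\textbf{Rank.} The kernel of $B$ restricted to $V_{\mc L}$ consists of those $h \in V_{\mc L}$ with $\Pi^\perp(fh) = 0$, that is, $fh \in V_{\mc L}$. This is precisely $K_{\mc L}(f)$. By rank--nullity on the domain $V_{\mc L}$, whose dimension is $|\mc L|$ because the monomials are linearly independent, we get $\rank(B) = |\mc L| - \dim K_{\mc L}(f)$.

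Combining the two facts gives
\[
\sum_{S_1\notin\mc L,\,S_2\in\mc L}\hat f(S_1\triangle S_2)^2 = \|B\|_{HS}^2 \le \big(|\mc L|-\dim K_{\mc L}(f)\big)\|f\|_\infty^2 .
\]

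There is no serious obstacle here. The only points requiring care are confirming the matrix-entry identity for $M_f$, which uses $x^{T}x^{S_2}=x^{T\triangle S_2}$ on the cube, and identifying the kernel of the off-diagonal block with $K_{\mc L}(f)$.
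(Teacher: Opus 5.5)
Your proposal is correct and follows the paper's proof essentially step for step. Both arguments bound the squared Frobenius norm of the block $A[\mc{L}^c,\mc{L}]$ by $\rank(B)\cdot\|B\|^2$, use $\|B\|\le\|f\|_\infty$, and get $\rank(B)=|\mc{L}|-\dim K_{\mc{L}}(f)$ by identifying $\ker B$ with $K_{\mc{L}}(f)$. Your justification $\EE[(fh)^2]\le\|f\|_\infty^2\EE[h^2]$ gives only $\le$ rather than the equality you claim, but $\le$ is all the argument needs.
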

\begin{proof}
    Consider the linear map from the vector space of functions $\{-1, 1\}^n \to \mathbb{R}$ to itself given by $\varphi \mapsto f \varphi$.
    Let $A$ be the matrix of this map in the basis $(x^S)_{S \subseteq [n]}$ (with rows and columns indexed by subsets of $[n]$). Then, the entries of $A$ can be expressed in terms of the Fourier coefficients of $f$ as 
    \begin{equation*}
    A_{S_1, S_2} = \EE\left[f(\xi) \xi^{S_1} \cdot \xi^{S_2}\right] = \hat{f}(S_1 \triangle S_2)\,.
    \end{equation*}
    Denote $\mc{L}^c = \{S \subseteq [n]: S \notin \mc{L}\}$, and let $B = A[\mc{L}^c, \mc{L}]$ be the submatrix of $A$ with rows indexed by $\mc{L}^c$ and columns indexed by $\mc{L}$. Let $\sigma_1 \ge \ldots \ge \sigma_{\rank(B)} > 0$ be the nonzero singular values of $B$. Using that the largest singular value equals the operator norm $\|B\|$, we conclude that 
    \begin{equation} \label{eq:sum-of-squares}
    \sum_{\substack{S_1 \notin \mc{L} \\ S_2 \in \mc{L}}} \hat{f}(S_1 \triangle S_2)^2 = \sum_{\substack{S_1 \notin \mc{L} \\ S_2 \in \mc{L}}} B_{S_1, S_2}^2 = \tr(B^T B) = \sum_{i=1}^{\rank(B)} \sigma_i^2 \le \rank(B) \cdot \|B\|^2.
    \end{equation}
    It remains to bound the rank and the operator norm of $B$. Indeed, since $B$ is a submatrix of $A$, 
    \begin{equation} \label{eq:norm-of-B}
    \|B\| \le \|A\| = \sup_{\|\varphi\|_2 = 1} \|f \varphi\|_2 = \|f\|_{\infty}\,,
    \end{equation}
    and, since the kernel of $B$ can be identified with the set of functions $h \in V_{\mc{L}}$ such that $fh \in V_{\mc{L}}$,
    \begin{equation} \label{eq:rank-of-B}
    \rank(B) = |\mc{L}| - \dim \ker(B) = |\mc{L}| - \dim K_{\mc{L}}(f)\,.
    \end{equation}
    Substituting \cref{eq:norm-of-B,eq:rank-of-B} into \cref{eq:sum-of-squares} gives the desired bound. 
\end{proof}

Now, we combine \cref{lemma:linear-algebra} with a couple of combinatorial observations to prove \cref{lemma:fourier}.
\begin{proof}[\textbf{Proof of \cref{lemma:fourier}}]
    By rescaling, we may assume that $\|f\|_{\infty} = 1$. For an integer $k$, denote 
    $$\mc{L}_k = \{S \subseteq [n]: |S| \le k\}, \qquad V_k = \Span_{\RR}\left\{x^S : S \in \mc{L}_k\right\}, \qquad K_k(f) = \{h \in V_k : fh \in V_k\}\,.$$
    For each $0 \le k \le n-1$, \cref{lemma:linear-algebra} gives 
    \begin{equation}  \label{eq:apply-linear-algebra}
    \sum_{\substack{|S_1| > k \\ |S_2| \le k}} \hat{f}(S_1 \triangle S_2)^2 \le |\mc{L}_k| - \dim K_k(f).
    \end{equation}
    Clearly, $\dim V_k = |\mc{L}_k| = \sum_{\ell = 0}^k \binom{n}{\ell}$. To get a lower bound on the dimension of $K_k(f)$, we recall that $f = p/q$ where $p$ and $q$ are polynomials of degree at most $d$. Since $q(x) \neq 0$ for each $x \in \{-1, 1\}^n$, multiplication by $q$ is invertible, and thus the subspace $qV_{k-d} = \{qh : h \in V_{k-d}\} \subseteq V_k$ has the same dimension as $V_{k-d}$. Furthermore, 
    $$fqV_{k-d} = \{fqh : h \in V_{k-d}\} = \{ph : h \in V_{k-d}\} \subseteq V_k\,,$$
    and therefore 
    \begin{equation*}
    \dim K_k(f) \ge \dim (qV_{k-d}) = \dim V_{k-d} = \sum_{0 \le \ell \le k-d} \binom{n}{\ell}.
    \end{equation*}
    Substituting this into \cref{eq:apply-linear-algebra}, summing over $k = 0, 1, \ldots, n-1$, and dividing by $2^n$ yields
    \begin{equation} \label{eq:sum-over-k}
    \frac{1}{2^n}\sum_{k = 0}^{n-1} \sum_{\substack{|S_1| > k \\ |S_2| \le k}} \hat{f}(S_1 \triangle S_2)^2 \le \frac{1}{2^n}\sum_{k = 0}^{n-1} \sum_{\ell = \max(k-d+1,0)}^k \binom{n}{\ell} \le \frac{d}{2^n} \sum_{\ell = 0}^n \binom{n}{\ell} = d\,.
    \end{equation}
    By exchanging the order of summation, we can rewrite the left-hand side of \cref{eq:sum-over-k} as
    \begin{equation} \begin{aligned} \label{eq:exchange-sum}
    \frac{1}{2^n}\sum_{k = 0}^{n-1} \sum_{\substack{|S_1| > k \\ |S_2| \le k}} \hat{f}(S_1 \triangle S_2)^2 &= \sum_{S \subseteq [n]} \left(\frac{1}{2^n} \sum_{T \subseteq [n]} \max\Big\{|T| - |S \triangle T|, 0\Big\}\right) \hat{f}(S)^2\,.
    \end{aligned} \end{equation}
    The coefficient of $\hat{f}(S)^2$ in the last sum is the expected value of $\max\big\{|T| - |S \triangle T|, 0\big\}$ for a uniformly random $T \subseteq [n]$. Writing $T = \{i \in [n]: \xi_i = 1\}$, we have 
    \begin{equation*}
    \EE\max\Big\{|T| - |S \triangle T|, 0\Big\} = \EE\max\Big\{\sum_{i \in S} \xi_i, 0\Big\} = \lambda_{|S|}\,.
    \end{equation*}
    Combined with \cref{eq:sum-over-k} and \cref{eq:exchange-sum}, this gives the desired bound.
\end{proof}

\section{Proofs of the main results}

In this section, we deduce the results stated in the introduction from \cref{lemma:fourier}. These deductions are quite standard and similar to the arguments in \cite{meka-nguyen-vu}.

\begin{fact}[{see e.g.\ \cite[Theorem 2.38]{odonnell-book}}]\label{fact:influence-fourier}
    For every function $f: \{-1, 1\}^n \to \mathbb{R}$, we have
    \begin{equation*}
    \Inf[f] = 4\sum_{S \subseteq [n]} |S| \hat{f}(S)^2.
    \end{equation*}
\end{fact}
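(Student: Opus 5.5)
The identity $\Inf[f] = 4\sum_{S} |S|\hat f(S)^2$ follows by expanding each term of the total influence in the Fourier basis and then applying Parseval. Fix $i \in [n]$. Since $(\xi^{(i)})^S = -\xi^S$ when $i \in S$ and $(\xi^{(i)})^S = \xi^S$ otherwise, the Fourier expansion $f = \sum_S \hat f(S) x^S$ gives
\begin{equation*}
f(\xi) - f(\xi^{(i)}) = \sum_{S \ni i} 2\hat f(S)\,\xi^S\,.
\end{equation*}
The terms with $i \notin S$ cancel, and the terms with $i \in S$ double. This is the only step that uses the structure of the cube, and it is immediate.

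Next, use that the monomials $(x^S)_{S\subseteq[n]}$ are orthonormal with respect to $\langle g,h\rangle = \EE[g(\xi)h(\xi)]$, as recalled at the start of the previous section. By Parseval,
\begin{equation*}
\EE\big[(f(\xi)-f(\xi^{(i)}))^2\big] = 4\sum_{S \ni i}\hat f(S)^2\,.
\end{equation*}

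Finally, sum over $i \in [n]$ and exchange the order of summation. Each $S$ is counted once for every $i \in S$, so it contributes $|S|$ times, giving
\begin{equation*}
\Inf[f] = \sum_{i=1}^n 4\sum_{S\ni i}\hat f(S)^2 = 4\sum_{S\subseteq[n]}|S|\,\hat f(S)^2\,.
\end{equation*}

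No step is a real obstacle. The one point that needs care is the normalization: the definition of $\Inf[f]$ here uses the squared difference $(f(\xi)-f(\xi^{(i)}))^2$ rather than the usual half-difference. That choice is what produces the factor $4$, compared with the convention in O'Donnell's book, where the influence equals $\sum_S |S|\hat f(S)^2$.
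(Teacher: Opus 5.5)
Your proof is correct and matches the paper's argument: the paper also shows that $f(\xi)-f(\xi^{(i)})$ has Fourier coefficient $2\hat f(S)$ for $S \ni i$ and $0$ otherwise, applies Parseval, and sums over $i$. Your remark that the factor $4$ comes from the normalization, relative to O'Donnell's convention, is also accurate.
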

\begin{proof}
    For $i \in [n]$, let $f_{(i)}$ denote the function given by $$f_{(i)}(x_1, \ldots, x_n) = f(x_1, \ldots, x_n) - f(x_1, \ldots, x_{i-1}, -x_i, x_{i+1}, \ldots, x_n)\,.$$ Then, for each $S \subseteq [n]$, 
    $$ \hat{f}_{(i)}(S) = \begin{cases} 2\hat{f}(S), & \text{if } i \in S; \\ 0, & \text{if } i \notin S. \end{cases}$$
    Thus, by Parseval's identity, 
    \begin{equation*}
    \Inf[f] = \sum_{i=1}^n \EE\big[f^2_{(i)}(\xi)\big] = \sum_{i=1}^n \sum_{S \subseteq [n]} \hat{f}_{(i)}(S)^2 = 4\sum_{i=1}^n \sum_{\substack{S \subseteq [n] \\ S \ni i}} \hat{f}(S)^2 = 4\sum_{S \subseteq [n]} |S| \hat{f}(S)^2. \qedhere
    \end{equation*}
\end{proof}

\begin{proof}[\textbf{Proof of \cref{thm:influence}}]
    By \cref{eq:stirling}, for each $1 \le m \le n$ we have 
    \begin{equation*}
    \frac{m}{\lambda_m} \le \frac{n}{\lambda_n} = \left(\sqrt{2\pi} + O(1/n)\right) \sqrt{n}\,.
    \end{equation*}
    Therefore, by \cref{fact:influence-fourier} and \cref{lemma:fourier},
    \begin{equation*}
    \Inf[f] = 4\sum_{S \subseteq [n]} |S| \hat{f}(S)^2 \le \left(4\sqrt{2\pi} + O(1/n)\right) \sqrt{n} \sum_{S \subseteq [n]} \lambda_{|S|} \hat{f}(S)^2 \le \left(4\sqrt{2\pi} + O(1/n)\right) d \sqrt{n}\|f\|^2_{\infty}. \qedhere
    \end{equation*}
\end{proof}

\begin{proof}[\textbf{Proof of \cref{thm:influence-pointwise}}] 
    Denote $\delta = \min\{|p(x)|: x \in \{-1, 1\}^n,\,\; p(x) \neq 0\}$ (and set $\delta = \infty$ if this set is empty). For each $\eps \in (0, \delta)$, define the function $f_{\eps}:\{-1, 1\}^n \to \RR$ as
    $$f_{\eps}(x) = \frac{p(x)}{p(x) + \eps} \qquad \text{for all } x \in \{-1, 1\}^n\,.$$
    Then, $\lim\limits_{\eps \to 0} f_{\eps}(x) = \mathbf{1}_{p\neq 0}(x)$ for each $x \in \{-1, 1\}^n$, and thus $\lim\limits_{\eps \to 0}\|f_{\eps} - \tfrac{1}{2}\|_{\infty} = \tfrac{1}{2}$.
    Applying \cref{thm:influence} to each of the functions $f_{\eps} - \tfrac{1}{2}$ and exchanging the finite sum and the limit, we conclude that
    \begin{align*}
    \Inf[\mathbf{1}_{p \neq 0}] = 
    \Inf[\mathbf{1}_{p \neq 0} - \tfrac{1}{2}] = \lim_{\eps \to 0} \Inf[f_{\eps} - \tfrac{1}{2}] &\le \left(4\sqrt{2\pi} + O(1/n)\right) \deg (p) \sqrt{n} \cdot \lim_{\eps \to 0} \|f_{\eps} - \tfrac{1}{2}\|^2_{\infty} \\
    &= \left(\sqrt{2\pi} + O(1/n)\right) \deg (p) \sqrt{n}\,. \qedhere
    \end{align*}
\end{proof}

The next lemma is a variant of \cite[Eq.\ 3.1]{meka-nguyen-vu}, which connects the anticoncentration of a polynomial to the total influence of a certain rational function. 

\begin{lemma} \label{lemma:anticoncentration-influence}
    Let $G(x_1, \ldots, x_d) = \sum_{S \subseteq [d]} a_S x^S$ be a multilinear polynomial of degree $d$ in $d$ variables with complex coefficients such that $|a_{[d]}| \ge 1$. Also, let $g: \{-1, 1\}^d \to \RR$ be the function given by $g(x) = \frac{1}{1 + |G(x)|^2}$. Then,
    \begin{equation*}
    \PP\big[|G(\xi_1, \ldots, \xi_d)| \le \tfrac{1}{2}\big] \le \frac{64 \cdot 2^d}{d+1} \Inf[g]\,.
    \end{equation*}
\end{lemma}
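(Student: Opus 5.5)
The plan is to reduce the statement to a combinatorial lower bound on the edge boundary of $g$ and then prove that bound by induction on $d$. Write $N$ for the number of points $x \in \{-1,1\}^d$ with $|G(x)| \le \tfrac12$ ("small" points). Then $\PP[|G(\xi)| \le \tfrac12] = N/2^d$. Also
\[
\Inf[g] = 2^{1-d}\sum_{\{x,x^{(i)}\}} \big(g(x) - g(x^{(i)})\big)^2 ,
\]
where the sum runs over the edges of the cube. So the claim is equivalent to
\[
\sum_{\text{edges}} \big(g(x)-g(x^{(i)})\big)^2 \ \ge\ \frac{(d+1)N}{128\cdot 2^{d}} .
\]
Two pointwise facts are used throughout. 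At a small point, $g \ge 4/5$. At a point with $|G| \ge 1$ ("large" point), $g \le 1/2$. Hence every edge joining a small point to a large point contributes at least $(3/10)^2$, and more generally a path from a small point to a large point of length $\ell$ contributes at least $(3/10)^2/\ell$ by Cauchy--Schwarz.

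The structural input is the top coefficient. Since $a_{[d]} = 2^{-d}\sum_x G(x)x^{[d]}$, we get $\max_x |G(x)| \ge |a_{[d]}| \ge 1$, so a large point always exists. For the induction, split the cube along the coordinate $x_d = s \in \{\pm1\}$. The restriction $G_s$ is multilinear in $d-1$ variables, and its coefficient of $x^{[d-1]}$ is $a_{[d-1]} + s\,a_{[d]}$. These two values differ by $2a_{[d]}$, so at least one of them, for $s$ say, has modulus $\ge 1$. The induction hypothesis then applies to that half-cube and gives a boundary contribution $\gtrsim d N_s/2^{d-1}$ from its internal edges, where $N_s$ is the number of small points in that half.

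For the small points in the other half, use the edges in direction $d$ together with the pointwise facts. Each such small point either has a large (or non-small) neighbour across direction $d$, or it sits above a small point of the good half. In the latter case its contribution is charged to the good half, as in a standard edge-isoperimetric induction. Here $\Inf$ restricted to the two halves plus the cross edges reproduces the recursion
\[
B_d(N) \ \ge\ B_{d-1}(N_s) + c\,(N - 2N_s)_+ .
\]
The base case $d = 0$ or $1$ follows from the existence of a large point. One then checks that the linear-in-$(d+1)$ normalisation closes with constant $1/128$.

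I expect the main obstacle to be getting the factor $d+1$ rather than just a constant. The naive Poincaré bound $\mathrm{Var}(g) \le \Inf[g]/4$ only gives $\Inf[g] \gtrsim N/4^d$, which loses the $d$. The induction has to be arranged so that each of the $d$ coordinate directions contributes. If the direct splitting does not close, I would first try to strengthen the induction hypothesis to a weighted statement of the form $\sum_{\text{edges}} (\Delta g)^2 \ge c\,(d+1)\,2^{-d}N(1 - N2^{-d})^{-1}$. Alternatively, I would symmetrise over the choice of splitting coordinate: since every $\partial_i G$ has top coefficient $a_{[d]}$ in $d-1$ variables, each direction $i$ yields a good half-cube, and averaging the recursion over $i \in [d]$ should produce the $(d+1)$ gain.
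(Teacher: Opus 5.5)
Your inductive step has a genuine gap, in two places. First, the cross-edge term $c\,(N-2N_s)_+$ is not justified. Your pointwise facts only control edges joining a small point ($g\ge 4/5$) to a \emph{large} point ($g\le 1/2$). A small point of the bad half whose neighbour across direction $d$ is merely non-small, i.e.\ has $\tfrac{1}{2}<|G|<1$, can have that neighbour's value of $g$ arbitrarily close to $4/5$, so that cross edge contributes essentially nothing.

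Second, even if you grant the recursion, it cannot produce the factor $d+1$. Consider the symmetric configuration $N_s=N/2$, where every bad-half small point lies over a good-half small point; this occurs, e.g., for $G=x_2(x_1-1)$ with $d=2$. There the recursion gives only
\[
B_d(N)\ge B_{d-1}(N/2)\ge \frac{dN}{128\cdot 2^d},
\]
which falls short of $\frac{(d+1)N}{128\cdot 2^d}$. Iterating, the recursion only propagates the constant in $B_d(N)\ge c\,N/2^d$ without ever improving it. So you recover only the Poincar\'e-level bound $\Inf[g]\gtrsim N/4^d$, which you yourself identified as too weak. The two fallback ideas are not carried out, so the $d+1$ in the statement is not established.

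The missing idea is that no bound linear in $N$ is needed. Since the probability is at most $1$, it suffices to show $\frac{64\cdot 2^d}{d+1}\Inf[g]\ge 1$ as soon as one small point $x_-$ exists. Pair it with a large point $x_+$, which you correctly get from $|a_{[d]}|\ge 1$. Expand $g(x_-)-g(x_+)=\sum_{S\neq\emptyset}\hat g(S)(x_-^S-x_+^S)$ and apply Cauchy--Schwarz with weights $|S|$ and $1/|S|$:
\[
\Bigl(\tfrac{3}{10}\Bigr)^2\le\Bigl(\sum_{S\neq\emptyset}|S|\,\hat g(S)^2\Bigr)\Bigl(\sum_{S\neq\emptyset}\frac{(x_-^S-x_+^S)^2}{|S|}\Bigr)\le\frac{\Inf[g]}{4}\cdot 4\sum_{k=1}^d\frac{1}{k}\binom{d}{k}\le \Inf[g]\cdot\frac{4\cdot 2^d}{d+1}.
\]
The last step uses $\frac{1}{k}\le\frac{2}{k+1}$ and $\frac{1}{k+1}\binom{d}{k}=\frac{1}{d+1}\binom{d+1}{k+1}$; this is the paper's argument.

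In your normalisation this reads
\[
\sum_{\text{edges}}\bigl(g(x)-g(x^{(i)})\bigr)^2\ge\frac{d+1}{8}\Bigl(\tfrac{3}{10}\Bigr)^2>\frac{d+1}{128},
\]
which already dominates your target $\frac{(d+1)N}{128\cdot 2^d}$ for every $N\le 2^d$. Conceptually, this is the fact that the effective resistance between any two vertices of $\{-1,1\}^d$ is $O(1/d)$. The gain comes from spreading $g(x_-)-g(x_+)$ over all Fourier levels, or equivalently over all edges of the cube. Your single-path Cauchy--Schwarz sees resistance $\ell$, and an edge-by-edge induction never recovers the factor $d$.
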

\begin{proof}
    If $\PP\left[|G(\xi_1, \ldots, \xi_d)| \le \tfrac{1}{2}\right] = 0$, then we are done. Otherwise, there is a point $x_- \in \{-1, 1\}^d$ such that $|G(x_-)| \le \tfrac{1}{2}$. In this case, we will show that 
    \begin{equation} \label{eq:sb-i-0}
    \frac{64 \cdot 2^d}{d+1} \Inf[g] \ge 1 \ge \PP\big[|G(\xi_1, \ldots, \xi_d)| \le \tfrac{1}{2}\big].
    \end{equation}
    Since 
    \begin{equation*}
    \max_{x \in \{-1, 1\}^d} |G(x)| \ge \left|\EE\big[G(\xi_1, \ldots, \xi_d) \xi_1 \cdots \xi_d\big]\right| = |a_{[d]}| \ge 1\,,
    \end{equation*}
    there also exists a point $x_+ \in \{-1, 1\}^d$ such that $|G(x_+)| \ge 1$. So, 
    \begin{equation} \label{eq:sb-i-1}
    |g(x_-) - g(x_+)| = \left|\frac{1}{1 + |G(x_-)|^2} - \frac{1}{1 + |G(x_+)|^2}\right| \ge \frac{4}{5} - \frac{1}{2} > \frac{1}{4}.
    \end{equation}
    On the other hand, by Cauchy--Schwarz and \cref{fact:influence-fourier},
    \begin{equation}\begin{aligned} \label{eq:sb-i-2}
    (g(x_-) - g(x_+))^2 &= \left(\sum_{S \subseteq [d]} \hat{g}(S) (x_-^S - x_+^S)\right)^2 \le \left(\sum_{\substack{S \subseteq [d],S \neq \emptyset}} |S| \hat{g}(S)^2\right) \left(\sum_{\substack{S \subseteq [d], S \neq \emptyset}} \frac{(x_-^S - x_+^S)^2}{|S|}\right) \\
    &\le \frac{1}{4} \Inf[g] \cdot 4 \sum_{\substack{S \subseteq [d] \\ S \neq \emptyset}} \frac{1}{|S|} = \Inf[g] \cdot \sum_{k = 1}^d \frac{1}{k} \binom{d}{k}\,. 
    \end{aligned} \end{equation}
    Finally, since $\tfrac{1}{k+1} \binom{d}{k} = \frac{1}{d+1} \binom{d+1}{k+1}$, we have
    \begin{equation} \label{eq:sb-i-3}
    \sum_{k = 1}^d \frac{1}{k} \binom{d}{k} \le 2 \sum_{k = 1}^d \frac{1}{k+1} \binom{d}{k} = \frac{2}{d+1} \sum_{k = 1}^d \binom{d+1}{k+1} \le \frac{4 \cdot 2^d}{d+1}\,.
    \end{equation}
    Together, \cref{eq:sb-i-1,eq:sb-i-2,eq:sb-i-3} imply \cref{eq:sb-i-0}, completing the proof.
\end{proof}

\begin{proof}[\textbf{Proof of \cref{thm:small-ball-anticoncentration}}]
    By rescaling, we may assume that $\gamma = 1$. By the definition of $r = r_{1}(F)$, there exist pairwise disjoint sets $S_1, \ldots, S_r \subseteq [n]$ of size $d$ such that $|a_{S_i}| \ge 1$ for all $i \in [r]$. By conditioning on the outcomes of $\xi_j$ for all $j \notin S_1 \cup \ldots \cup S_r$, we may also assume that $r = n/d$.  
    
    Our goal is to bound the probability that $|F(\xi_1, \ldots, \xi_n)| \le 1$. In fact, it is sufficient to show that
    \begin{equation} \label{eq:smaller-ball}
    \PP\big[|F(\xi_1, \ldots, \xi_n)| \le \tfrac{1}{2}\big] = O\left(\frac{2^d d}{\sqrt{n}}\right).
    \end{equation}
    Indeed, one can cover the disk of radius $1$ by a constant number of disks of radius $\tfrac{1}{2}$ (say, centered at the points $z_1, \ldots, z_{M} \in \CC$), and then apply \cref{eq:smaller-ball} to the polynomials $F - z_1, \ldots, F - z_{M}$.

    Denote $f = \frac{1}{1 + |F|^2}$. Since $|F|^2 = F \cdot \overline{F}$ is a polynomial of degree at most $2d$ with real coefficients, we can apply \cref{thm:influence} to $f$ and obtain that 
    \begin{equation} \label{eq:small-ball-1}
    \Inf[f] = O\left(d \sqrt{n}\right).
    \end{equation}
    For each $i \in [r]$, let $F_i$ be the (random) polynomial in the variables $(x_j)_{j \in S_i}$ obtained from $F$ by fixing the values of $x_j$ to $\xi_j$ for all $j \notin S_i$, and denote $f_i = \frac{1}{1 + |F_i|^2}$. Since $|a_{S_i}| \ge 1$ and the coefficient of $x^{S_i}$ in $F_i$ is exactly $a_{S_i}$, \cref{lemma:anticoncentration-influence} yields
    \begin{equation} \label{eq:small-ball-2}
    \PP\big[|F(\xi_1, \ldots, \xi_n)| \le \tfrac{1}{2}\big] = \EE_i\Big[\PP\big[|F_i((\xi_j)_{j \in S_i})| \le \tfrac{1}{2}\big]\Big] = O\left(\frac{2^d}{d} \EE_i\big[\Inf[f_i]\big]\right),
    \end{equation}
    where $\EE_i$ denotes the expectation over the random variables $(\xi_j)_{j \notin S_i}$. Finally, from the definition of total influence, we have $\Inf[f] = \sum_{i=1}^r \EE_i[\Inf[f_i]]$ with $r=n/d$. Together with \cref{eq:small-ball-1} and \cref{eq:small-ball-2}, this implies that
    \begin{equation*}
        \frac{n}{d} \cdot \PP\big[|F(\xi_1, \ldots, \xi_n)| \le \tfrac{1}{2}\big] = O\left(2^d \sqrt{n}\right),
    \end{equation*}
    which is exactly the desired bound \cref{eq:smaller-ball}.
\end{proof}

\begin{proof}[\textbf{Proof of \cref{thm:pointwise-anticoncentration}}]
    Denote $\gamma = \min\{|a_S|: S \subseteq [n],\; |S| = d,\; a_S \neq 0\}$. Then, $r_{\gamma}(F) = r(F)$, and by \cref{thm:small-ball-anticoncentration},
    \begin{equation*}
        \PP\big[F(\xi_1, \ldots, \xi_n) = 0\big] \le \PP\big[|F(\xi_1, \ldots, \xi_n)| \le \gamma\big] \le \frac{C 2^d \sqrt{d}}{\sqrt{r_{\gamma}(F)}} = \frac{C 2^d \sqrt{d}}{\sqrt{r(F)}}\,. \qedhere 
    \end{equation*}
\end{proof}

\begin{proof}[\textbf{Proof of \cref{thm:pointwise-anticoncentration-robust}}]
    We argue by induction on $d$ that the desired bound holds with $C_1 = \sqrt{2} C$, where $C$ is the constant in \cref{thm:pointwise-anticoncentration}. If $r(F) \ge \frac{1}{2d} s(F)$, then this bound follows from \cref{thm:pointwise-anticoncentration}. Otherwise, let $S_1, \ldots, S_{r(F)}$ be pairwise disjoint sets of size $d$ such that $a_{S_i} \neq 0$ for all $i$. Then, by the definition of $r(F)$, $a_S = 0$ for all sets $S \subseteq [n]$ of size $d$ that do not intersect the set $I = S_1 \cup \ldots \cup S_{r(F)}$. Condition on an arbitrary outcome of the random variables $(\xi_i)_{i \in I}$, and note that the resulting polynomial $F_0$ in the remaining variables $(x_i)_{i \notin I}$ has degree at most $d-1$. It suffices to show that the probability (with respect to the randomness of $(\xi_i)_{i \notin I}$) that $F_0((\xi_i)_{i \notin I}) = 0$ is at most $\frac{C_1 2^d d}{\sqrt{s(F)}}$. We distinguish the following two cases:

    \textbf{Case 1:} $F_0$ is a constant polynomial. Since $F_0$ is obtained from $F$ by fixing the values of $|I| = d \cdot r(F) < \frac{1}{2}s(F) < s(F)$ variables, this constant is not zero, and thus 
    \begin{equation*}
    \PP\big[F_0((\xi_i)_{i \notin I}) = 0\big] = 0\,.
    \end{equation*}
    
    \textbf{Case 2:} $1 \le \deg(F_0) \le d-1$. Again, since $F_0$ is obtained from $F$ by fixing the values of $|I|$ variables, we have $s(F_0) \ge s(F) - |I| > \frac{1}{2}s(F)$. Therefore, by the induction hypothesis,
    \begin{equation*}
    \PP\big[F_0((\xi_i)_{i \notin I}) = 0\big] \le \frac{C_1 2^{d-1} (d-1)}{\sqrt{s(F_0)}} \le \frac{C_1 2^d d}{\sqrt{s(F)}}\,. \qedhere
    \end{equation*}
\end{proof}

\bibliographystyle{plain}
\bibliography{references.bib}

@misc {jin-kwan-sauermann-wang,
  title        = {Algebraic aspects of the polynomial {Littlewood}--{Offord} problem},
  author       = {Zhihan Jin and Matthew Kwan and Lisa Sauermann and Yiting Wang},
  note         = {Preprint, arXiv:2505.23335},
  year         = {2025},
  url          = {https://arxiv.org/abs/2505.23335}
}

@article {kwan-sauermann,
    AUTHOR = {Kwan, Matthew and Sauermann, Lisa},
     TITLE = {Resolution of the quadratic {L}ittlewood-{O}fford problem},
   SJOURNAL = {Compos. Math.},
  JOURNAL = {Compositio Mathematica},
    VOLUME = {161},
      YEAR = {2025},
    NUMBER = {12},
     PAGES = {3089--3139},
      ISSN = {0010-437X,1570-5846},
   MRCLASS = {60C05},
  MRNUMBER = {5051290},
       DOI = {10.1112/S0010437X25102789},
       URL = {https://doi.org/10.1112/S0010437X25102789},
}

@incollection {nguyen-survey,
    AUTHOR = {Nguyen, Hoi H. and Vu, Van H.},
     TITLE = {Small ball probability, inverse theorems, and applications},
 BOOKTITLE = {Erd\H{o}s centennial},
    SERIES = {Bolyai Soc. Math. Stud.},
    VOLUME = {25},
     PAGES = {409--463},
 PUBLISHER = {J\'{a}nos Bolyai Math. Soc., Budapest},
      YEAR = {2013},
   MRCLASS = {60E15 (60C05 60F10)},
  MRNUMBER = {3203607},
MRREVIEWER = {Sho Matsumoto},
       DOI = {10.1007/978-3-642-39286-3\_16},
       URL = {https://doi.org/10.1007/978-3-642-39286-3_16},
}

@article{costello-tao-vu,
    AUTHOR = {Costello, Kevin P. and Tao, Terence and Vu, Van},
     TITLE = {Random symmetric matrices are almost surely nonsingular},
   SJOURNAL = {Duke Math. J.},
  JOURNAL = {Duke Mathematical Journal},
    VOLUME = {135},
      YEAR = {2006},
    NUMBER = {2},
     PAGES = {395--413},
      ISSN = {0012-7094,1547-7398},
   MRCLASS = {15A52 (05C35 05D40 60C05)},
  MRNUMBER = {2267289},
MRREVIEWER = {Steven\ Joel\ Miller},
       DOI = {10.1215/S0012-7094-06-13527-5},
       URL = {https://doi.org/10.1215/S0012-7094-06-13527-5},
}

@article {costello,
    AUTHOR = {Costello, Kevin P.},
     TITLE = {Bilinear and quadratic variants on the {L}ittlewood-{O}fford
              problem},
   SJOURNAL = {Israel J. Math.},
  JOURNAL = {Israel Journal of Mathematics},
    VOLUME = {194},
      YEAR = {2013},
    NUMBER = {1},
     PAGES = {359--394},
      ISSN = {0021-2172,1565-8511},
   MRCLASS = {60G50 (11K65 60E15)},
  MRNUMBER = {3047075},
MRREVIEWER = {Eugenijus\ Manstavi\v cius},
       DOI = {10.1007/s11856-012-0082-4},
       URL = {https://doi.org/10.1007/s11856-012-0082-4},
}

@article {erdos-littlewood-offord,
    AUTHOR = {Erd{\H o}s, P.},
     TITLE = {On a lemma of {L}ittlewood and {O}fford},
   SJOURNAL = {Bull. Amer. Math. Soc.},
  JOURNAL = {Bulletin of the American Mathematical Society},
    VOLUME = {51},
      YEAR = {1945},
     PAGES = {898--902},
      ISSN = {0002-9904},
   MRCLASS = {60.0X},
  MRNUMBER = {14608},
MRREVIEWER = {M.\ Kac},
       DOI = {10.1090/S0002-9904-1945-08454-7},
       URL = {https://doi.org/10.1090/S0002-9904-1945-08454-7},
}

@article {littlewood-offord,
    AUTHOR = {Littlewood, J. E. and Offord, A. C.},
     TITLE = {On the number of real roots of a random algebraic equation.
              {III}},
   SJOURNAL = {Rec. Math. [Mat. Sbornik] N.S.},
  JOURNAL = {Rec. Math. [Mat. Sbornik] N.S.},
    VOLUME = {12/54},
      YEAR = {1943},
     PAGES = {277--286},
   MRCLASS = {41.1X},
  MRNUMBER = {9656},
MRREVIEWER = {A.\ Zygmund},
}

@article{meka-nguyen-vu,
 author = {Meka, Raghu and Nguyen, Oanh and Vu, Van},
 title = {Anti-concentration for Polynomials of Independent Random Variables},
 year = {2016},
 pages = {1--17},
 doi = {10.4086/toc.2016.v012a011},
 publisher = {Theory of Computing},
 journal = {Theory of Computing},
 volume = {12},
 number = {11},
 URL = {https://theoryofcomputing.org/articles/v012a011},
}

@article {kane,
    AUTHOR = {Kane, Daniel M.},
     TITLE = {The correct exponent for the {G}otsman-{L}inial conjecture},
   SJOURNAL = {Comput. Complexity},
  JOURNAL = {Computational Complexity},
    VOLUME = {23},
      YEAR = {2014},
    NUMBER = {2},
     PAGES = {151--175},
      ISSN = {1016-3328,1420-8954},
   MRCLASS = {68Q15 (68Q87)},
  MRNUMBER = {3212596},
       DOI = {10.1007/s00037-014-0086-z},
       URL = {https://doi.org/10.1007/s00037-014-0086-z},
}

@article {jain-kwan-mubayi-tran,
    AUTHOR = {Jain, Vishesh and Kwan, Matthew and Mubayi, Dhruv and Tran,
              Tuan},
     TITLE = {The edge-statistics conjecture for hypergraphs},
   SJOURNAL = {Int. Math. Res. Not. IMRN},
  JOURNAL = {International Mathematics Research Notices. IMRN},
      YEAR = {2025},
    NUMBER = {18},
     PAGES = {Paper No. rnaf273, 19 pp.},
      ISSN = {1073-7928,1687-0247},
   MRCLASS = {05C35 (05C65)},
  MRNUMBER = {4961240},
MRREVIEWER = {Dan\ Hefetz},
       DOI = {10.1093/imrn/rnaf273},
       URL = {https://doi.org/10.1093/imrn/rnaf273},
}

@article {kwan-sudakov-tran,
    AUTHOR = {Kwan, Matthew and Sudakov, Benny and Tran, Tuan},
     TITLE = {Anticoncentration for subgraph statistics},
   SJOURNAL = {J. Lond. Math. Soc. (2)},
  JOURNAL = {Journal of the London Mathematical Society},
    VOLUME = {99},
      YEAR = {2019},
    NUMBER = {3},
     PAGES = {757--777},
      ISSN = {0024-6107,1469-7750},
   MRCLASS = {05C35 (05D40 60C05)},
  MRNUMBER = {3977889},
MRREVIEWER = {Yuval\ Filmus},
       DOI = {10.1112/jlms.12192},
       URL = {https://doi.org/10.1112/jlms.12192},
}

@misc{grebennikov-kwan-edge-statistics,
      title={Edge-statistics beyond $1/e$}, 
      author={Alexandr Grebennikov and Matthew Kwan},
      year={2025},
      eprint={2510.24691},
      archivePrefix={arXiv},
      primaryClass={math.CO},
      url={https://arxiv.org/abs/2510.24691}, 
      note={Preprint, arXiv:2510.24691},
}

@misc{grebennikov-kwan-littlewood-offord,
      title={Geometric {L}ittlewood-{O}fford problems via lattice point counting}, 
      author={Alexandr Grebennikov and Matthew Kwan},
      year={2025},
      eprint={2505.24699},
      archivePrefix={arXiv},
      primaryClass={math.CO},
      url={https://arxiv.org/abs/2505.24699},
      note={Preprint, arXiv:2505.24699}, 
}

@article {razborov-viola,
    AUTHOR = {Razborov, Alexander and Viola, Emanuele},
     TITLE = {Real advantage},
   SJOURNAL = {ACM Trans. Comput. Theory},
  JOURNAL = {ACM Transactions on Computation Theory},
    VOLUME = {5},
      YEAR = {2013},
    NUMBER = {4},
     PAGES = {Art. 17, 8 pp.},
      ISSN = {1942-3454,1942-3462},
   MRCLASS = {68Q05 (06E30 68Q15)},
  MRNUMBER = {3146710},
MRREVIEWER = {G\'abor\ Horv\'ath},
       DOI = {10.1145/2540089},
       URL = {https://doi.org/10.1145/2540089},
}

@article{rosinski-samorodnitsky,
 ISSN = {00911798, 2168894X},
 URL = {http://www.jstor.org/stable/2244849},
 author = {Jan Rosiński and Gennady Samorodnitsky},
 journal = {The Annals of Probability},
 number = {1},
 pages = {422--437},
 publisher = {Institute of Mathematical Statistics},
 title = {Symmetrization and Concentration Inequalities for Multilinear Forms with Applications to Zero-One Laws for {L}évy Chaos},
 urldate = {2026-08-04},
 volume = {24},
 year = {1996}
}

@misc{KKWY,
  title         = {Rational degree is polynomially related to degree},
  author        = {Kothari, Robin and Kovacs-Deak, Matt and Wang, Daochen and Yang, Rain Zimin},
  year          = {2026},
  eprint        = {2601.08727},
  note          = {Preprint, arXiv:2601.08727},
}

@article {gotsman-linial,
    AUTHOR = {Gotsman, Craig and Linial, Nathan},
     TITLE = {Spectral properties of threshold functions},
  SJOURNAL = {Combinatorica},
   JOURNAL = {Combinatorica},
    VOLUME = {14},
      YEAR = {1994},
    NUMBER = {1},
     PAGES = {35--50},
      ISSN = {0209-9683},
       DOI = {10.1007/BF01305949},
       URL = {https://doi.org/10.1007/BF01305949},
}

@inproceedings {chapman,
    AUTHOR = {Chapman, Brynmor},
     TITLE = {The {G}otsman-{L}inial conjecture is false},
 BOOKTITLE = {Proceedings of the 2018 {A}nnual {ACM}-{SIAM} {S}ymposium on
              {D}iscrete {A}lgorithms ({SODA})},
     PAGES = {692--699},
 PUBLISHER = {SIAM, Philadelphia, PA},
      YEAR = {2018},
      ISBN = {978-1-61197-503-1},
       DOI = {10.1137/1.9781611975031.45},
       URL = {https://doi.org/10.1137/1.9781611975031.45},
}

@misc {kim-maldonado-wellens,
      title={On graphs and the {G}otsman-{L}inial conjecture for $d=2$},
      author={Hyo Won Kim and Chris Maldonado and Jake Wellens},
      year={2017},
      eprint={1709.06650},
      archivePrefix={arXiv},
      primaryClass={math.CO},
      url={https://arxiv.org/abs/1709.06650},
      note={Preprint, arXiv:1709.06650},
}

@book{odonnell-book,
  author    = {Ryan O'Donnell},
  title     = {Analysis of Boolean Functions},
  publisher = {Cambridge University Press},
  year      = {2014}
}

@misc{chang-liu-liu,
      title={A proof of {Chv\'atal}'s conjecture via a sharp correlation inequality},
      author={Fan Chang and Hong Liu and Miao Liu},
      year={2026},
      eprint={2609.19123},
      archivePrefix={arXiv},
      primaryClass={math.CO},
      url={https://arxiv.org/abs/2609.19123},
      note={Preprint, arXiv:2609.19123},
}

@misc{ellis-filmus-friedgut,
      title={{Chv\'atal}'s conjecture: a proof from {The Book}},
      author={David Ellis and Yuval Filmus and Ehud Friedgut},
      year={2026},
      eprint={2609.28404},
      archivePrefix={arXiv},
      primaryClass={math.CO},
      url={https://arxiv.org/abs/2609.28404},
      note={Preprint, arXiv:2609.28404},
}

@misc{keevash,
      title={On {K}ahn's flow conjecture}, 
      author={Peter Keevash},
      year={2026},
      eprint={2609.23595},
      archivePrefix={arXiv},
      primaryClass={math.CO},
      url={https://arxiv.org/abs/2609.23595}, 
      note={Preprint, arXiv:2609.23595},
}

@inproceedings {ABKRT,
    AUTHOR = {Aaronson, Scott and Ben-David, Shalev and Kothari, Robin and
              Rao, Shravas and Tal, Avishay},
     TITLE = {Degree vs. approximate degree and quantum implications of
              {H}uang's sensitivity theorem},
 BOOKTITLE = {Proceedings of the 53rd {A}nnual {ACM} {SIGACT} {S}ymposium on
              {T}heory of {C}omputing ({STOC})},
     PAGES = {1330--1342},
 PUBLISHER = {ACM, New York},
      YEAR = {2021},
      ISBN = {978-1-4503-8053-9},
       DOI = {10.1145/3406325.3451047},
       URL = {https://doi.org/10.1145/3406325.3451047},
}

@misc{openai2026-gotsman-linial,
  author = {{OpenAI}},
  title  = {Average sensitivity of polynomial threshold functions},
  year   = {2026},
  note   = {\url{https://github.com/openai/math/blob/main/preprints/Average-Sensitivity-of-Polynomial-Threshold-Functions-September-25-2026/main.pdf}}
}

\end{document}